\font\Bbb=msbm10
\def\R{\hbox{\Bbb R}}
\def\address#1
\def\expandafter\@aabuffer\expandafter
\newtheorem{theorem}{Theorem}[section]
\newtheorem{lemma}[theorem]{Lemma}
\newtheorem{remark}[theorem]{Remark}
\newtheorem{example}[theorem]{Example}
\def\prbox
\begin{document}


\title{Doodles and commutator identities
}

\author{Andrew Bartholomew \\ 
School of Mathematical Sciences, University of Sussex\\
Falmer, Brighton, BN1 9RH, England\\
e-mail address: andrewb@layer8.co.uk \\ 
\\
Roger Fenn \\ 
School of Mathematical Sciences, University of Sussex\\
Falmer, Brighton, BN1 9RH, England\\
e-mail address: roger.fenn@gmail.com \\ 
\\ 
Naoko Kamada \\ 
Graduate School of Natural Sciences, Nagoya City University\\
Nagoya, Aichi 467-8501, Japan\\
e-mail address: kamada@nsc.nagoya-cu.ac.jp \\ 
\\ 
Seiichi Kamada \\ 
Department of Mathematics, Osaka University\\
Toyonaka, Osaka 560-0043, Japan\\
e-mail address: kamada@math.sci.osaka-cu.ac.jp
}

\maketitle

\begin{abstract}
A doodle is a collection of immersed circles without triple intersections in the $2$-sphere. 
It was shown by the second author and P.~Tayler that doodles induce commutator identities (identities amongst commutators) in a free group.  
In this paper we observe this idea more closely by concentrating on doodles with proper noose systems and elementary commutator identities.  In particular we show that there is a bijection between cobordism classes of colored doodles and weak equivalence classes of elementary commutator identities.  
\end{abstract}

{\bf Keywords:} {doodles; immersed circles; commutators; commutator identities.} 

{\bf Mathematics Subject Classification 2010:} {57M07, 57M25}

\vspace{0.3cm}
\begin{center}
{\LARGE{ Dedicated to the memory of Patrick~Dehornoy}}
 \end{center}
 
\section{Introduction}

Doodles were first introduced by the second author and P.~Taylor in \cite{FT}. The original definition of a doodle was a collection of embedded circles in the $2$-sphere $S^2$ whose multiple points are transverse double points. 
M.~Khovanov, \cite{MK}, extended the idea to allow each component to be an immersed circle in $S^2$ or a closed oriented surface. In \cite{BFKKdoodles} the authors introduced virtual doodles as an analogy of virtual knots \cite{Kauf} and  
developed  stably equivalence classes of doodles on closed oriented surfaces with a relationship with virtual doodles, which is analogue to a relationship between virtual knots and stably equivalence classes of knot diagrams on closed oriented surfaces given in \cite{CKS, KK}. 

In this paper, by a {\em doodle diagram} or a {\em diagram} we mean a collection of immersed circles in $S^2$ whose multiple points are transverse double points.  
(Double points of a diagram are also referred to as {\em crossings}.) 
   Two diagrams are said to be {\em equivalent} if they are equivalent under the equivalence relation generated by ambient isotopies in $S^2$ and local moves depicted in Figure~\ref{dfig1and2}, where 
$H_1^+$ generates a monogon, $H_1^-$ deletes it, $H_2^+$ generates a bigon and $H_2^-$ deletes it. 
   The equivalence class is called a {\em doodle}.  As is the usual custom, we will often not distinguish between a doodle and its diagram.
 A doodle or a doodle diagram is {\em oriented} if each component is oriented. Throughout this paper we assume that doodles and doodle diagrams are oriented.

    \begin{figure}[h]
    \centerline{\epsfig{file=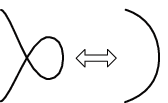, height=1.5cm} 
    \qquad \qquad 
    \epsfig{file=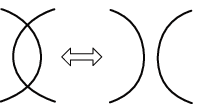, height=1.5cm} }
    \caption{$H_1^{\pm 1}$ and $H_2^{\pm 1}$}\label{dfig1and2}
    \end{figure}

Figure~\ref{dfigPoppydfigBorromean} shows diagrams of two doodles (without orientations). The first, called the {\em poppy}, has one component and the other with 3 components is called the {\em Borromean doodle}.

    \begin{figure}[h]
    \centerline{\epsfig{file=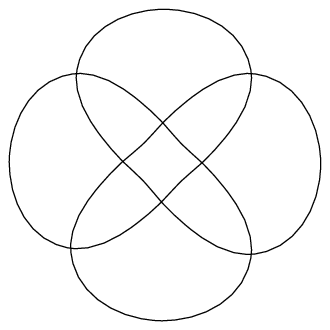, height=2.5cm} 
    \qquad \qquad 
    \epsfig{file=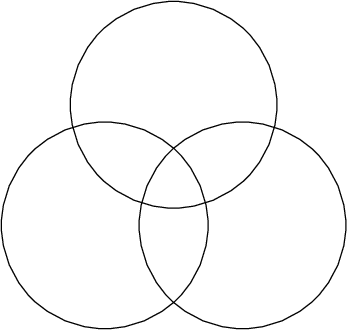, height=2.5cm}}
    \caption{The poppy and the Borromean doodle}\label{dfigPoppydfigBorromean}
    \end{figure}

A diagram is called {\em minimal} if there are no monogons and no bigons, or equivalently if neither  
$H_1^-$ nor $H_2^-$ can be applied to the diagram.  Any doodle has a unique minimal diagram modulo trivial components in the sense of Corollary~2.8.9 of \cite{Fenn83} and Theorem~2.2 of \cite{MK}.  (Refer to 
\cite{BFKKdoodles} for minimal diagrams of doodles on closed oriented surfaces and their stably equivalence classes.)  The diagrams in Figure~\ref{dfigPoppydfigBorromean} are minimal diagrams. 

A doodle or a doodle diagram is called {\em colored} or {\em $S$-colored} when each component is labeled by an element of a fixed non-empty set $S$.  

The second author and Taylor showed in \cite{FT} that doodles induce commutator identities (identities amongst commutators) in the free group. 
Precisely speaking, for a non-empty set $S$, an $S$-colored doodle diagram with a noose system yields a commutator identity in the free group on $S$.  We will recall this in the next section. 
For example consider the following examples from the Borromean doodle (Example~\ref{example:A}).

The left hand side of Figure~\ref{dfigBorromean1dfigBorromean2} yields the identity
$$(a,c)^b (b,c) (b,a)^c (c,a) (c,b)^a (a,b)\equiv1$$
where for example, $(a,c)^b=b^{-1}(a^{-1}c^{-1}ac)b$, and the right hand side yields
$$(bc,a) (ca,b) (ab,c) \equiv1.$$

    \begin{figure}[h]
    \centerline{\epsfig{file=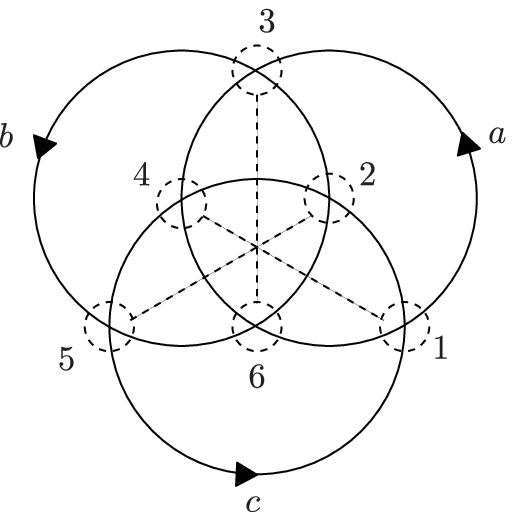, height=5cm} 
    \qquad \qquad 
    \epsfig{file=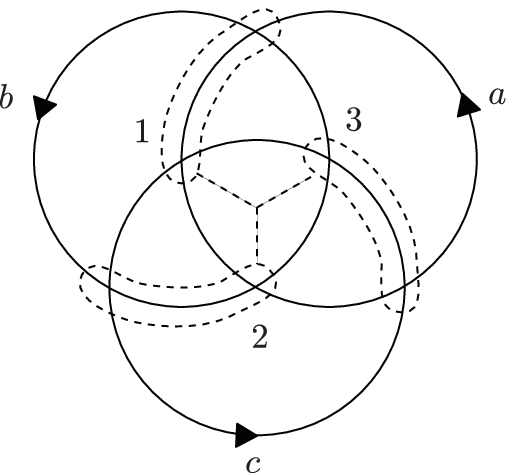, height=4.8cm} }
    \caption{Doodles with noose systems}\label{dfigBorromean1dfigBorromean2}
    \end{figure}

Another example which can be extracted from the Borromean doodle is the Hall-Witt identity (cf. \cite{H}): 
$$((a,b),c^a)  ((c,a),b^c) ((b,c),a^b) \equiv 1.$$
This is a group-theoretic analogue of the Jacobi identity for Lie algebras. 
These three examples from the Borromean doodles were introduced in \cite{FT}.  

In Sections~\ref{sect:CommutatorIdentities} and \ref{sect:Diagrams}, 
we first recall the idea in \cite{FT} to obtain a commutator identity from a colored doodle diagram using a noose system  (Theorem~\ref{thm:IdentityGeneral}), and conversely we show that for a commutator identity there exists a colored doodle diagram and a noose system which yield the given commutator identity (Theorem~\ref{thm:DiagramGeneral}).  
   In particular, when we use a proper noose system, we obtain an elementary commutator identity (Theorem~\ref{thm:IdentityProper}). Conversely, for an elementary commutator identity there exists a colored doodle diagram and a proper noose system which yield the given elementary commutator identity (Theorem~\ref{thm:DiagramProper}). The definitions of a (proper) noose system and an (elementary) commutator identity are given in Section~\ref{sect:CommutatorIdentities}.  

After Section~\ref{sect:EquivalnceIdentities} we concentrate to colored 
 doodles with proper noose systems and elementary commutator identities. 
Then the relationship between colored doodles and commutator identities established in \cite{FT} and in Sections~\ref{sect:CommutatorIdentities} and \ref{sect:Diagrams} 
becomes clear to understand with 
the action of the braid group and other fundamental transformations.  
We also discuss cobordisms of colored doodles.  

In Section~\ref{sect:EquivalnceIdentities} we introduce three kinds of equivalence relations on elementary commutator identities: strict equivalence $\cong$, equivalence $\simeq$ and weak equivalence $\sim$.  
We show that a colored doodle diagram induces a unique elementary commutator identity up to equivalence $\simeq$
 (Theorem~\ref{thm:41}) 
 and that a colored doodle induces a unique elementary commutator identity up to equivalence $\sim$  (Theorem~\ref{thm:42}) . 
 
In Section~\ref{sect:Cobordism} we introduce the notion of cobordism for colored doodles and doodle diagrams. Then two colored doodle diagrams are cobordant if and only if they induce weakly equivalent elementary commutator identities (Theorem~\ref{thm:CobC}).   There is a bijection between cobordism classes of colored doodles and weak equivalence classes of elementary commutator identities (Theorem~\ref{thm:CobD}).

This work was supported by JSPS KAKENHI Grant Numbers JP19K03496 and JP19H01788.

\section{How to obtain a commutator identity}
\label{sect:CommutatorIdentities}

We recall the idea in \cite{FT} to obtain a commutator identity from a doodle diagram.  
Throughout this paper we identify $S^2$ with $\R^2 \cup \{\infty\}$.  
For a doodle diagram $D$, we denote by $\Sigma(D)$ the set of the crossings. 
When $D$ meets $\infty$, moving it slightly, we assume that $D$ is away from $\infty$  
 so that we can draw it in $\R^2= S^2 \setminus\{\infty\}$.

Let $X = D^2 \cup I'$, where $D^2 = 
\{ (x,y) \in \R^2 \mid x^2 + y^2 \leq 1\}$ and $I'=\{ (x,y) \in \R^2 \mid x \in [1,2], y=0 \}$.  

A {\em noose} is the image $N$ of an embedding of $X$ in $S^2$. 
The {\em loop} (or the {\em rope}) of a noose is the image of $S^1 = \partial D^2$ (or the image of $I'$), and the   
{\em head} is the image of $D^2$.  
The {\em neck} is the image of $(1,0)$, where the head is joined to the rope, and   
the {\em root} is the image of $(2,0)$.  
Unless otherwise stated, we always assume that $N \subset \R^2= S^2 \setminus\{\infty\}$.

We usually present a noose by drawing its loop and rope in $\R^2$, where the head is understood to be the bounded region by the loop.  Moreover, we assume that the loop is oriented counterclockwise.  

By a {\em noose for a doodle diagram $D$} we mean a noose $N$ whose root misses $D$ such that 
the loop and the rope are transverse to the components of $D$ and 
disjoint from the crossings of $D$. 

A {\em noose system} is a sequence of nooses ${\cal N}= (N_1, \dots, N_m)$ such that (i) they are mutually disjoint except their common roots called the {\it base point}, and (ii) the ropes of them appear counterclockwise in this order around the base point.  

A {\em noose system for a doodle diagram $D$} means a noose system consisting of nooses for $D$ 
such that  every crossing of $D$ is in a head.  
For examples, see Figure~\ref{dfigBorromean1dfigBorromean2}, where the numbers $1, 2, \dots$ indicate the ordering of the nooses.

Let $S=\{ a, b, c, \dots\}$ be a non-empty set, which we use for labeling a doodle diagram. 
We denote by $S^{-1} = \{ a^{-1}, b^{-1}, c^{-1}, \dots \}$ the set of inverse letters of $S$, and by 
${\rm Word}(S \cup S^{-1})$ the monoid of words on $S \cup S^{-1}$.   For words 
$u, v \in {\rm Word}(S \cup S^{-1})$, $(u,v)$ means $u^{-1} v^{-1} u v $ and $u^v$ means $v^{-1} u v$.

A doodle or a doodle diagram is said to be  {\em colored} or {\em $S$-colored} if every component is labeled by an element of  a fixed non-empty set $S$.  

Let $\alpha$ be a path transverse to the components of an $S$-colored doodle diagram $D$.  For an intersection $x$ of $\alpha$ and $D$, the {\em intersection letter of $\alpha$  with $D$ at $x$} is an element $a^{\epsilon} \in S \cup S^{-1}$ 
such that $a \in S$ is the label of the component of $D$ where the intersection $x$ occurs and 
the exponent $\epsilon$ is $+1$ if the path $\alpha$ passes across the component of $D$ from the right hand side 
of the component with respect to its orientation 
to the left hand side, or $\epsilon$ is $-1$ otherwise.  
See Figure~\ref{dfigIWadfigIWb}.  
The {\em intersection word along $\alpha$ with $D$} is a word on $S \cup S^{-1}$ obtained by reading off the intersection letters at the intersection points along $\alpha$.

    \begin{figure}[h]
    \centerline{\epsfig{file=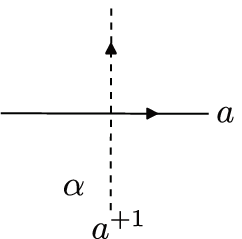, height=3cm} 
    \qquad \qquad 
    \epsfig{file=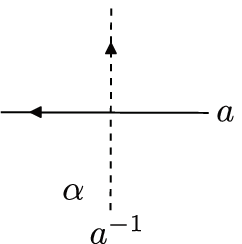, height=3cm} }
    \caption{Intersection letters}\label{dfigIWadfigIWb}
    \end{figure}

Let $D$ be a colored doodle diagram and $N$ a noose for $D$.  Consider a path $\ell_N : [0,1] \to S^2$ which starts from the root, goes along the rope, turns  along the loop of the noose counterclockwise, comes back to the root along the rope.  
The {\it intersection word of the noose $N$ with $D$} is the intersection word along $\ell_N$ with $D$.

When we say a {\em commutator identity} on $S$ or in the free group on $S$, it is an expression 
 $$w_1 \dots w_m \equiv 1$$ 
 such that each $w_i$ is a word on $S \cup S^{-1}$ representing an element of the commutator subgroup of the free group and the product $w_1 \dots w_m$ is a word representing the identity element of the free group.

\begin{theorem}[Fenn and Taylor \cite{FT}]\label{thm:IdentityGeneral} 
Let $D$ be a colored oriented doodle diagram.  Let ${\cal N}=(N_1, \dots, N_m)$ be a noose system for  $D$, and   
let $w_1, \dots, w_m$ be the intersection words of $N_1, \dots, N_m$ with $D$, respectively.  Then 
the following commutator identity holds in the free group on $S$: 
 $$w_1 \dots w_m \equiv 1.$$ 
\end{theorem}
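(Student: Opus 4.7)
The plan is to verify two things: first, that each word $w_i$ lies in the commutator subgroup of the free group $F(S)$ (so that the expression is indeed a commutator identity in the sense defined above), and second, that the product $w_1 w_2 \cdots w_m$ represents the identity element of $F(S)$.

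For the first point, I observe that the sign $\epsilon$ in an intersection letter $a^\epsilon$ agrees with the local oriented intersection sign of $\ell_{N_i}$ with the corresponding $a$-labeled component of $D$. Since every component of $D$ is a closed immersed curve in $S^2$ and $\ell_{N_i}$ is a closed loop in $S^2$, both classes are trivial in $H_1(S^2) = 0$, so the algebraic intersection number of $\ell_{N_i}$ with the union of all $a$-labeled components of $D$ vanishes. This number is exactly the exponent sum of $a$ in $w_i$. Running over all $a \in S$, the word $w_i$ abelianizes to zero and thus belongs to the commutator subgroup.

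For the second point, form the concatenated loop $\ell := \ell_{N_1} \cdot \ell_{N_2} \cdots \ell_{N_m}$ based at the common root $p$; by construction its intersection word with $D$ is literally $w_1 w_2 \cdots w_m$. The geometric heart of the argument is that $\ell$ is null-homotopic in the punctured sphere $S^2 \setminus \Sigma(D)$. Indeed, the heads of the nooses are mutually disjoint disks arranged counterclockwise around $p$ whose union contains every crossing, so the class of $\ell$ in $\pi_1(S^2 \setminus \Sigma(D), p) \cong \langle g_1, \dots, g_n \mid g_1 g_2 \cdots g_n = 1 \rangle$ is the product of small counterclockwise meridian loops around all $n$ crossings, taken in the cyclic order inherited from the noose system. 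This product is the single defining relation and is therefore trivial.

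To conclude, I would pick such a null-homotopy $H \colon D^2 \to S^2 \setminus \Sigma(D)$ of $\ell$ and make it generic with respect to $D$: away from finitely many critical times, each time-slice is a loop transverse to $D$, and at each critical time the only degeneracy is a single simple quadratic tangency of the slice with a smooth arc of $D$ (necessarily away from the crossings, since $H$ lands in $S^2 \setminus \Sigma(D)$). At each critical time, the intersection word is modified only by insertion or deletion of a canceling pair $a^\epsilon a^{-\epsilon}$, which is trivial in $F(S)$; the terminal slice is constant and has empty intersection word, so $w_1 w_2 \cdots w_m \equiv 1$ in $F(S)$. The main obstacle I anticipate is precisely this last transversality step: carefully verifying that a generic null-homotopy in the punctured sphere avoids $\Sigma(D)$ and encounters only these elementary tangency events, so that every change in the intersection word is genuinely the free-group cancellation $a^\epsilon a^{-\epsilon} \equiv 1$ and not some more exotic local replacement.
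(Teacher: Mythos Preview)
Your argument is correct, but the paper takes a shorter and more elementary route via Lemma~\ref{thm:BasicObservation}. A regular neighbourhood $A$ of the union $N_1\cup\cdots\cup N_m$ is a $2$-disk whose boundary carries exactly the word $w_1\cdots w_m$; the complementary disk $E=\overline{S^2\setminus A}$ contains no crossings of $D$ (every crossing lies in some head), so $D\cap E$ is a family of pairwise disjoint labelled oriented arcs, and Lemma~\ref{thm:BasicObservation} gives $w_1\cdots w_m\equiv 1$ at once. Your null-homotopy of $\ell$ in $S^2\setminus\Sigma(D)$ \emph{is} this disk $E$, so the transversality obstacle you anticipate dissolves: in $E$ the diagram has no double points at all, and contracting $\ell$ across $E$ meets only the elementary tangencies you describe---which is exactly the pairwise cancellation that proves Lemma~\ref{thm:BasicObservation}. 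Your detour through the presentation $\pi_1(S^2\setminus\Sigma(D))\cong\langle g_1,\dots,g_n\mid g_1\cdots g_n\rangle$ works but is slightly delicate and redundant: identifying $[\ell]$ with the relator requires choosing meridian paths inside the nooses, which already amounts to exhibiting the crossing-free bounding disk $E$. For the commutator-subgroup claim your homological argument is fine; the local version implicit in the paper is that each arc of $D$ inside a head meets the loop of $N_i$ in two points of opposite sign, so the loop word---and hence its conjugate $w_i$---has zero exponent sum in every letter.
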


We call the commutator identity in Theorem~\ref{thm:IdentityGeneral} the 
{\it commutator identity obtained from $D$ by using ${\cal N}$} and denote it by 
$$ I(D, {\cal N}). $$

A basic observation justifying Theorem 2.1 is the following.  The proof may be left to the reader.

\begin{lemma}\label{thm:BasicObservation}
Consider a 2-disk $B$ with boundary $\alpha$ and suppose there is a family of properly embedded and pairwise disjoint oriented arcs in $B$ labelled by elements of $S$.  Then, starting at any point in $\alpha$ and reading the word in $S \cup S^{-1}$
corresponding to the intersections of $\alpha$ with the endpoints of the arcs, according to Figure 4, as one travels around $\alpha$ in either direction back to the base point, the resulting word represents the identity element of the free group on $S$.  Conversely, given a finite set of points on $\alpha$ labelled by $S \cup S^{-1}$, if the resulting word obtained by traversing around $\alpha$ represents the identity on the free group on $S$, then there exists a finite set of oriented arcs properly and disjointly embedded in $B$, labelled by $S$, whose endpoints are the given points on $\alpha$, with the given labels.
\end{lemma}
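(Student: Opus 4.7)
The plan is to prove both directions by induction, using outermost-arc arguments for the forward implication and adjacent-cancellation arguments for the converse.

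For the forward direction, I would induct on the number $n$ of arcs in $B$. The base case $n=0$ is trivial, since the word read off around $\alpha$ is empty. For the inductive step, I would exploit the standard fact that any collection of disjoint, properly embedded arcs in a disk contains an \emph{outermost} arc $\gamma$, i.e.\ an arc together with a sub-arc $\alpha'\subset\alpha$ bounding a sub-disk $B'\subset B$ whose interior meets no other arc. The two endpoints of $\gamma$ then lie consecutively on $\alpha$, and by the sign convention of Figure~\ref{dfigIWadfigIWb} they contribute letters $a^{\epsilon}$ and $a^{-\epsilon}$ (for $a\in S$ the label of $\gamma$), because crossing one arc twice in opposite senses produces a letter and its inverse. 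Hence these two letters cancel in $F(S)$. Removing $B'$ (together with $\gamma$) produces a disk with one fewer arc whose boundary word is obtained from the original by deleting the cancelling pair, and the inductive hypothesis finishes the argument.

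For the converse, I would induct on the length of the word $w$ labelling the points of $\alpha$. If $w$ is empty, we take no arcs. Otherwise, since $w\equiv 1$ in the free group $F(S)$ and free reduction is confluent with empty reduced form for the trivial element, $w$ cannot be already reduced: there must exist adjacent positions $i,i+1$ on $\alpha$ whose letters are mutually inverse, $a^{\epsilon}$ and $a^{-\epsilon}$. Join these two points by an arc $\gamma$ in $B$ running just along the boundary, close enough to $\alpha$ that it cuts off a small sub-disk $B'\subset B$ whose interior contains no other marked points. Orient and label $\gamma$ by $a$ so that it realises the prescribed letters at its endpoints (the sign convention forces a unique consistent orientation). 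Working in the complementary disk $B\setminus B'$, the remaining marked points on its boundary spell the word $w'$ obtained from $w$ by deleting the chosen cancelling pair; $w'$ still represents the identity in $F(S)$ and is shorter, so induction supplies disjoint arcs realising $w'$ inside this smaller disk. These, together with $\gamma$, give the required family in $B$.

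The main obstacle is ensuring that the adjacent cancelling pair used in the converse really can be realised by an arc close to the boundary without being forced to cross arcs constructed later; this is handled cleanly by performing the cancellation recursively in the smaller disk $B\setminus B'$, so that $\gamma$ is drawn first and the remaining arcs are subsequently constructed entirely inside the complementary region. A minor technical point is that the statement refers to a cyclic traversal of $\alpha$; since we are free to choose the base point, the existence of a linearly adjacent cancelling pair in $w$ is equivalent to the existence of such a pair cyclically, so no separate treatment of base-point-adjacent cancellations is needed.
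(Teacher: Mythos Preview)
Your argument is correct and complete. The paper itself does not supply a proof of this lemma, stating only that ``the proof may be left to the reader''; your outermost-arc induction for the forward direction and adjacent-cancellation induction for the converse constitute exactly the kind of standard argument the authors had in mind.

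Two very minor remarks. In the forward step it is slightly cleaner simply to delete the outermost arc $\gamma$ from $B$ (rather than excising the sub-disk $B'$): the remaining $n-1$ arcs still sit in the original disk $B$, and the boundary word loses precisely the adjacent cancelling pair contributed by the endpoints of $\gamma$. In the converse, your closing comment about cyclic versus linear adjacency is unnecessary: any nonempty word representing the identity in a free group already contains a \emph{linearly} adjacent cancelling pair (otherwise it would be reduced and hence nontrivial), so no appeal to the freedom of base point is needed.
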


\begin{example}\label{example:A}{\rm 
(1) 
Let $D$ and ${\cal N}$ be a colored doodle diagram and a noose system 
on the left hand side of Figure~\ref{dfigBorromean1dfigBorromean2}, where $S=\{a,b,c\}$.  
The intersection word of the first noose  is $b^{-1} a^{-1} c^{-1} a c b $, which is denoted by $b^{-1} (a,c) b$ or $(a,c)^b$ in our notation.   
The intersection words of the other nooses on the left hand side are 
$(b,c)$, $(b,a)^c$, $(c,a)$, $(c,b)^a$, and $(a,b)$, respectively.  Combining these words,  
we obtain the commutator identity $I(D, {\cal N})$ in the free group on $S$:  
$$(a,c)^b \, (b,c) \, (b,a)^c \, (c,a) \, (c,b)^a \, (a,b) \equiv 1.$$

(2) 
Let $D$ and ${\cal N}$ be a colored doodle diagram and a noose system 
on the right hand side of Figure~\ref{dfigBorromean1dfigBorromean2}.  
The intersection word of the first noose  is $c^{-1} b^{-1} a^{-1} b c a$, which is denoted by $(bc, a)$.  
The intersection words of the second noose and the third one are 
$(ca, b)$ and $(ab, c)$.  Then we have the commutator identity $I(D, {\cal N})$: 
$$(bc,a) \, (ca,b) \, (ab,c) \equiv1.$$ 
}\end{example}

\begin{remark}{\rm 
In Theorem~\ref{thm:IdentityGeneral}, 
when we change the coloring of $D$ by using a permutation $\sigma: S \to S$,  the commutator identity $I(D, {\cal N})$ is changed by replacing elements of $S$ by the corresponding elements under $\sigma$. 
For a subset $T$ of $S$, when we reverse the orientation of every component whose label is in $T$, the commutator  identity $I(D, {\cal N})$  is changed by switching  elements $x$ and $x^{-1}$ for every $x \in T$.  
}\end{remark}

A commutator identity in the free group on $S$ is called {\em elementary}, {\em elementary on $S$} or 
{\em $S$-elementary}  if it is written in a form such that 
$$ (a_1, b_1)^{u_1} \cdots (a_m, b_m)^{u_m} \equiv 1, $$ 
where  $a_i, b_i \in S$ and $u_i \in {\rm Word}(S \cup S^{-1})$ for each $i =1, \dots, m$.

A noose for a doodle diagram $D$ is {\it proper} if it satisfies the following two conditions (P1) and (P2): 
\begin{itemize}
\item[(P1)] 
The intersection of $D$ and the head is a union of two embedded arcs which intersect each other on a single point. 
\item[(P2)] 
The loop passes across a component of $D$ from left to right with respect to the orientation of the component, across a component of $D$ from left to right, then across the former component from   
right to left and across the latter component from right to left.   
(See Figure~\ref{dfigProper}.)
\end{itemize}

    \begin{figure}[h]
    \centerline{\epsfig{file=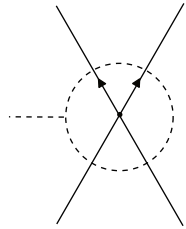, height=3cm} }
    \caption{The head of a proper noose for a doodle diagram}\label{dfigProper}
    \end{figure}

A noose system for a doodle diagram $D$ is {\it proper} if each noose is proper.  
The noose system depicted on the left hand side of Figure~\ref{dfigBorromean1dfigBorromean2} is proper.  
For any doodle diagram, there exists a proper noose system.

\begin{theorem}\label{thm:IdentityProper} 
In the situation of Theorem~\ref{thm:IdentityGeneral}, suppose that the noose system ${\cal N}$ is proper.  Then the commutator identity $I(D, {\cal N})$ obtained from $D$ by using ${\cal N}$ is elementary.  
\end{theorem}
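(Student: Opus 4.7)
The strategy is a direct verification: by Theorem~\ref{thm:IdentityGeneral} we already have the identity $w_1 \cdots w_m \equiv 1$, and it suffices to show that for a proper noose $N_i$ the intersection word $w_i$ has the shape $(a_i,b_i)^{u_i}$ for some $a_i, b_i \in S$ and $u_i \in {\rm Word}(S \cup S^{-1})$.

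Fix a proper noose $N_i$ for $D$ and decompose the path $\ell_{N_i}$ into three pieces: the rope travelled outward from the root to the neck, the loop travelled counterclockwise once starting and ending at the neck, and the rope travelled back from the neck to the root. If $v_i$ denotes the intersection word read along the outward rope, then the return trip along the same rope reverses both the order of the crossings and the sign $\epsilon$ of each crossing, so it reads $v_i^{-1}$. Hence $w_i = v_i \cdot \lambda_i \cdot v_i^{-1}$, where $\lambda_i$ is the intersection word read along the loop of $N_i$.

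The main step is to identify $\lambda_i$. Condition (P1) says that $D$ meets the head of $N_i$ in exactly two embedded arcs meeting in a single point, so the loop crosses $D$ in exactly four points, two on each of the two arcs. Condition (P2) fixes both the cyclic order and the signs of these crossings: reading counterclockwise along the loop we encounter first arc~$A$ left-to-right, then arc~$B$ left-to-right, then $A$ right-to-left, then $B$ right-to-left. By the sign convention (Figure~\ref{dfigIWadfigIWb}), a left-to-right crossing contributes $\epsilon=-1$ and a right-to-left crossing contributes $\epsilon=+1$. Letting $a_i,b_i \in S$ be the labels of $A$ and $B$, the loop word is therefore
\[
\lambda_i \;=\; a_i^{-1}\,b_i^{-1}\,a_i\,b_i \;=\; (a_i,b_i).
\]
Combining, $w_i = v_i\,(a_i,b_i)\,v_i^{-1} = (a_i,b_i)^{u_i}$ with $u_i := v_i^{-1}$, using the convention $x^y = y^{-1}xy$.

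Applying this to each noose in $\mathcal{N}$ and substituting into the identity of Theorem~\ref{thm:IdentityGeneral} yields
\[
(a_1,b_1)^{u_1}\,(a_2,b_2)^{u_2}\,\cdots\,(a_m,b_m)^{u_m}\;\equiv\;1,
\]
which is by definition elementary on $S$. The only real content is the computation of $\lambda_i$, and the only place one can slip is the sign convention together with the counterclockwise orientation of the loop; the careful bookkeeping in (P2) is designed precisely so that the four signs come out as $-,-,+,+$ in the order dictated by the cyclic order of the arcs, producing a commutator rather than some other length-four alternating word.
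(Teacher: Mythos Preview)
Your proof is correct and follows essentially the same approach as the paper: decompose the noose word into rope--loop--rope, read the loop word as $(a_i,b_i)$ using conditions (P1)--(P2), and set $u_i$ to be the inverse of the rope word. The paper's proof is simply a terser version of what you wrote, omitting the explicit verification of the sign convention that you spelled out.
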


\begin{proof}
Let ${\cal N}= (N_1, \dots, N_m)$ be a proper noose system for $D$. 
For each $i=1, \dots, m$, the intersection word along the loop of $N_i$ with $D$ is 
 $a_i^{-1} b_i^{-1} a_i b_i = (a_i, b_i)$ for some $a_i, b_i \in S$.  Let $u_i^{-1}$ be
  the intersection word along the rope of $N_i$ with $D$, where we regard the rope as a path from the base point to the neck.   Then the intersection word of $N_i$ is 
 $(a_i, b_i)^{u_i}$.  Thus the commutator identity is elementary.  
\end{proof}

\section{Constructing a doodle diagram from a commutator identity}
\label{sect:Diagrams}

In this section we show that every commutator identity is obtained from a doodle diagram, namely, 
for a given commutator identity, there exists a (non-unique) colored doodle diagram and a noose system which yield the commutator  identity. For an elementary commutator identity, we can take a proper noose system.

For a compact surface $M$ in $S^2$, a {\em doodle diagram in $M$} or {\em over $M$} means a collection of immersed circles  and properly immersed arcs in $M$ whose multiple points are transverse double points.  (When $M=S^2$, it is a doodle diagram in the usual sense.)

We first demonstrate, using an example, our method of constructing a doodle diagram and a noose system from a commutator identity.   
Let $S = \{a,b,c\}$ and consider the Hall-Witt identity, \cite{H}:  
$$((a,b),c^a) \, ((c,a),b^c)  \, ((b,c),a^b) \equiv 1. $$

The left hand side consists of three commutators, 
$((a,b),c^a)$, $((c,a),b^c)$, and $((b,c),a^b)$. 
Prepare a noose system consisting of three nooses, $N_1, N_2$ and $N_3$.  
For simplicity, we draw a figure such that each head is a rectangle as in the left hand side of Figure~\ref{dfigBCab}.  
For the first head, we draw horizontal and vertical parallel lines with labels in $S$ and with orientations such that the intersection word along the loop of $N_1$ is 
$$((a,b),c^a)  = (b^{-1} a^{-1} b a)(a^{-1} c^{-1} a) (a^{-1} b^{-1} a b)(a^{-1} c a)$$  
as in the figure.  
Similarly, we draw horizontal and vertical lines with labels and with orientations for the second and the third heads such that the 
intersection words along their loops are $((c,a),b^c)$ and $((b,c),a^b)$, respectively.  
Let $A$ be a regular neighborhood of the union of the three nooses, which is a $2$-disk in $S^2$.  We define  a doodle diagram over $A$, denoted by $D \cap A$, to be the union of the horizontal and vertical lines constructed for the heads.  The intersection word along the boundary of $A$ with $D$ is 
$((a,b),c^a) \,  ((c,a),b^c) \, ((b,c),a^b)$.  Since this word represents the identity element of the free group on $S$, 
by Lemma~\ref{thm:BasicObservation}, 
we can extend the diagram  $D \cap A$ to a colored doodle diagram $D$ in $S^2$ by adding some simple arcs  in the closure of $S^2 \setminus A$.  See the right hand side of Figure~\ref{dfigBCab}.  
Then we have a colored doodle diagram $D$ and a noose system ${\cal N}$ such that 
$I(D, {\cal N})$ is the given commutator identity.  

    \begin{figure}[h]
    \centerline{\epsfig{file=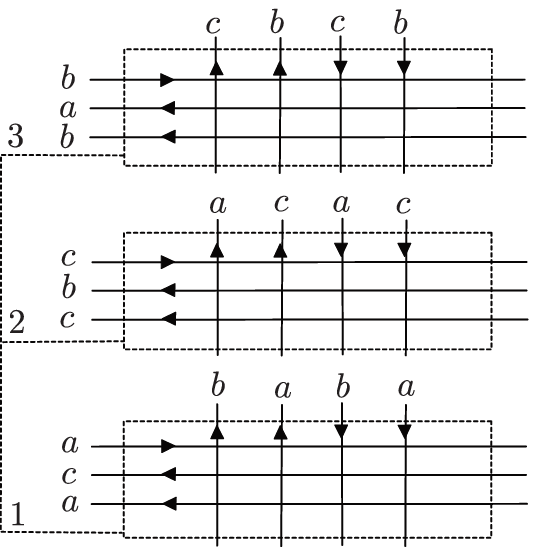, height=5.5cm} 
    \qquad \qquad 
    \epsfig{file=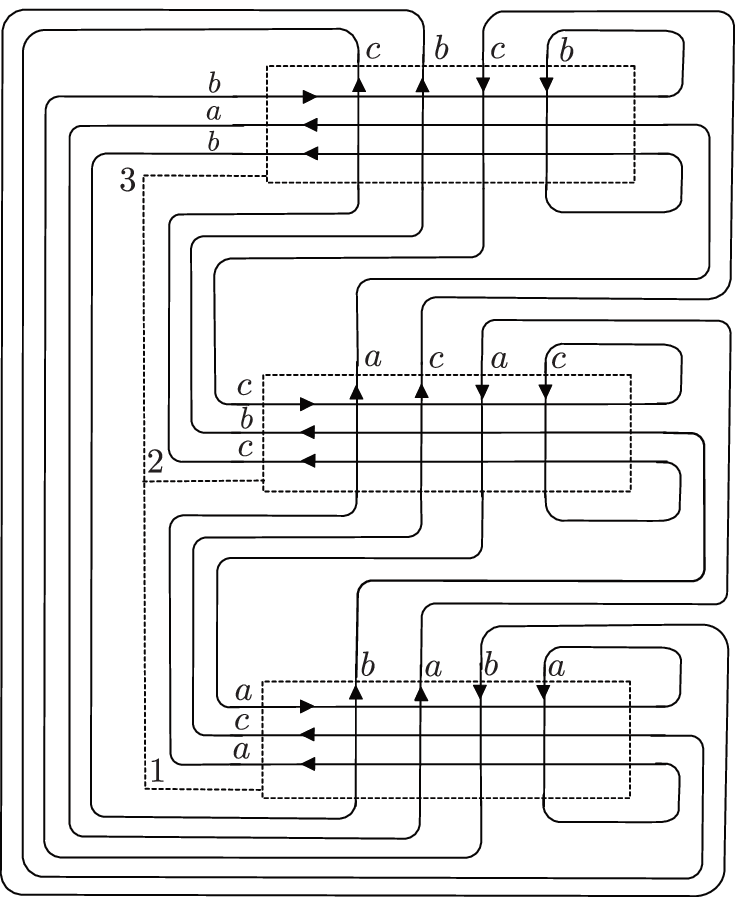, height=8cm} }
    \caption{Construction of a diagram}\label{dfigBCab}
    \end{figure}

The method explained above is always applicable to any commutator identity which is 
written in a form that 
$$ (s_1, t_1) \cdots (s_m, t_m) \equiv 1, $$ 
where  $s_i, t_i \in {\rm Word}(S \cup S^{-1})$ for each $i =1, \dots, m$. 

Modifying the method, we obtain the following. 

\begin{lemma}\label{thm:DiagramSpecial}
For a given commutator identity  in the free group on $S$ which is 
written in a form that 
$$ (s_1, t_1)^{u_1} \cdots (s_m, t_m)^{u_m} \equiv 1, $$ 
where  $s_i, t_i, u_i \in {\rm Word}(S \cup S^{-1})$ for each $i =1, \dots, m$, 
 there is a colored doodle diagram $D$ and a noose system ${\cal N}$ such that $I(D, {\cal N})$ is the given commutator identity.  
\end{lemma}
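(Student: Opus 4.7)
The plan is to adapt the construction illustrated for the Hall-Witt identity by adding local data along each rope to realize the conjugations by the $u_i$. For each $i = 1, \dots, m$ I would build a noose $N_i$ so that the intersection word obtained by traveling from the root along the rope, once around the loop counterclockwise, and then back along the rope to the root is precisely $u_i^{-1}(s_i, t_i)\, u_i$; joining the $m$ nooses at a common base point in counterclockwise order of the index then gives a noose system $\mathcal{N} = (N_1,\dots,N_m)$ whose total intersection word is the given left-hand side.

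First I construct the heads exactly as in the example of Figure~\ref{dfigBCab}. Writing $s_i = a_{i,1}^{\epsilon_{i,1}} \cdots a_{i,p_i}^{\epsilon_{i,p_i}}$ and $t_i = b_{i,1}^{\delta_{i,1}} \cdots b_{i,q_i}^{\delta_{i,q_i}}$ as words in $S \cup S^{\pm 1}$, I take the head of $N_i$ to be a rectangle crossed by $p_i$ horizontal arcs labeled by the $a_{i,j}$ and $q_i$ vertical arcs labeled by the $b_{i,k}$, oriented according to the signs $\epsilon_{i,j}$ and $\delta_{i,k}$, with the neck placed at a corner and the loop oriented counterclockwise, so that the intersection word along the loop is exactly $(s_i, t_i) = s_i^{-1} t_i^{-1} s_i t_i$. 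Next, next to the rope of $N_i$ I draw $|u_i|$ short oriented labeled arcs, each crossing the rope transversely once, whose labels and orientations are chosen so that the intersection word along the rope from the root to the neck spells $u_i^{-1}$ (every letter of $u_i$ contributes one such arc, with sign prescribed by the convention of Figure~\ref{dfigIWadfigIWb}).

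Finally I close the picture up outside a $2$-disk regular neighborhood $A$ of $N_1 \cup \cdots \cup N_m$. The arcs constructed above form a doodle diagram $D \cap A$ over $A$, and reading the intersection word along $\partial A$ starting near the base point (in the direction that makes the nooses contribute in order) gives exactly $(s_1, t_1)^{u_1} \cdots (s_m, t_m)^{u_m}$, which by hypothesis represents the identity of the free group on $S$. Since $S^2 \setminus \mathrm{int}(A)$ is a $2$-disk, Lemma~\ref{thm:BasicObservation} supplies a family of properly embedded, pairwise disjoint, oriented $S$-labeled arcs in this disk whose endpoints and labels on $\partial A$ match those of $D \cap A$; gluing them to $D \cap A$ closes the label-matched pairs into immersed circles and produces a colored doodle diagram $D$ in $S^2$ with $I(D, \mathcal{N})$ equal to the prescribed commutator identity.

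The main obstacle is purely diagrammatic bookkeeping inside the heads: one must check that when $s_i$ and $t_i$ are arbitrary words rather than single letters, the horizontal and vertical strands can be ordered and oriented so that each traversal of a side of the rectangle contributes exactly one signed letter in the correct position of $(s_i, t_i) = s_i^{-1} t_i^{-1} s_i t_i$. The Hall-Witt construction already exhibits every local configuration that arises, so this reduces to repeating that pattern for each factor.
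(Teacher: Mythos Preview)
Your proof is correct and follows essentially the same approach as the paper's own argument: construct the diagram inside each head as a grid of parallel horizontal and vertical labeled arcs realizing $(s_i,t_i)$ around the loop, add short transverse arcs across each rope to realize the conjugation word $u_i^{-1}$, and then invoke Lemma~\ref{thm:BasicObservation} to close up the diagram in the complementary disk $S^2\setminus A$. The only cosmetic difference is that the paper assigns the horizontal lines to $t_i$ and the vertical lines to $s_i$ (as made explicit later in the proof of Theorem~\ref{thm:DiagramProper}), whereas you reverse this convention; since you place the neck at an appropriate corner and choose orientations accordingly, this is immaterial.
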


\begin{proof}
We modify the argument above. 
Let $w_1 \cdots w_m \equiv 1$ be the commutator identity with 
$w_i = (s_i, t_i)^{u_i}$ for some $s_i, t_i, u_i \in  {\rm Word}(S \cup S^{-1})$ for each $i=1, \dots, m$. 
   Prepare a noose system consisting of $m$ nooses, $N_1, \dots, N_m$.  
We draw a figure such that each head is a rectangle as before.  
For each noose $N_i$,  draw  horizontal parallel lines  
and vertical parallel lines with labels and orientations in a regular neighborhood of the head such that the intersection word along the loop of $N_i$ is $(s_i, t_i)$.   
And draw some small arcs intersecting the rope of $N_i$ transversely equipped with labels in $S$ and with orientations such that the intersection word along the rope from the base point to the neck is $u_i^{-1}$.

Let $A$ be a regular neighborhood of the union of the nooses, which is a $2$-disk in $S^2$.  We assume that the horizontal lines, vertical lines and the small arcs intersecting the rope of $N_i$ are properly embedded arcs in $A$.  
    We define a doodle diagram over $A$, denoted by $D \cap A$,  to be the union of these arcs with labels and orientations.  
For each $i$, the intersection word of $N_i$ is $w_i = (s_i, t_i)^{u_i}$.  
Since $w_1 \cdots w_m$ represents the identity in the free group, 
by Lemma~\ref{thm:BasicObservation}, 
we can extend the diagram $D \cap A$ in $A$ to a doodle diagram in $S^2$ by adding some simple arcs in the closure of $S^2 \setminus A$.   
Then we obtain a desired colored doodle diagram and a noose system.  
\end{proof} 

\begin{theorem}[cf. \cite{FT}]\label{thm:DiagramProper}
Every elementary commutator identity is obtained from a doodle diagram.  Namely, 
for any elementary commutator identity in the free group on $S$, 
$$ (a_1, b_1)^{u_1} \cdots (a_m, b_m)^{u_m} \equiv 1, $$ 
where $a_i, b_i \in S$ and $u_i \in {\rm Word}(S \cup S^{-1})$ for each $i =1, \dots, m$, 
 there exists a colored doodle diagram $D$ and a proper noose system ${\cal N}$ such that $I(D, {\cal N})$ is the given elementary commutator identity.  
\end{theorem}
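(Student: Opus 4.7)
The plan is to refine the construction used in the proof of Lemma~\ref{thm:DiagramSpecial} so that each head $N_i$ contains exactly one double point of the diagram, arranged as in the template of Figure~\ref{dfigProper}; this makes the resulting noose system automatically proper.

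First, for the $i$th commutator $(a_i,b_i)^{u_i}$ I place the head of $N_i$ as a small rectangle and draw inside it exactly two properly embedded oriented arcs, labeled by $a_i$ and $b_i$, crossing transversely at a single point and configured as in Figure~\ref{dfigProper}. The orientations are chosen so that the intersection word read counterclockwise along the loop of $N_i$, starting from the neck, is $a_i^{-1} b_i^{-1} a_i b_i = (a_i,b_i)$; conditions (P1) and (P2) then hold by construction. Along the rope of $N_i$ I next insert finitely many pairwise disjoint small arcs transverse to the rope, labeled by elements of $S$ and oriented so that the intersection word along the rope, read from the base point to the neck, equals $u_i^{-1}$. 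The total intersection word of $N_i$ is then $u_i^{-1}(a_i,b_i)u_i = (a_i,b_i)^{u_i}$.

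Next I arrange the nooses $N_1,\dots,N_m$ to share a common root with their ropes emanating counterclockwise in the prescribed order, and take $A$ to be a regular $2$-disk neighborhood in $S^2$ of $N_1\cup\cdots\cup N_m$ together with the small arcs put along the ropes. The arcs drawn so far form a doodle diagram $D\cap A$ over $A$ whose intersection word along $\partial A$ is the concatenation
$$ (a_1,b_1)^{u_1}\cdots(a_m,b_m)^{u_m}, $$
which by hypothesis represents the identity element of the free group on $S$. Applying the converse half of Lemma~\ref{thm:BasicObservation} to the complementary disk $\overline{S^2\setminus A}$, I extend $D\cap A$ to a colored doodle diagram $D$ in $S^2$ by adjoining pairwise disjoint properly embedded oriented arcs in $\overline{S^2\setminus A}$ whose boundary points and labels on $\partial A$ agree with those already prescribed. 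Then ${\cal N}=(N_1,\dots,N_m)$ is a proper noose system for $D$, and by Theorem~\ref{thm:IdentityProper} the commutator identity $I(D,{\cal N})$ is exactly the given one.

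The only delicate point is the final extension: one must ensure the new arcs introduce no crossing inside a head or on a rope of any $N_i$. This is automatic because the extending arcs lie entirely in $\overline{S^2\setminus A}$, which meets each noose only transversely on $\partial A$. So the real content sits in the first step, where one needs to verify that the two-arc template of Figure~\ref{dfigProper} can be realized with loop word exactly $(a_i,b_i)$ for any prescribed ordered pair $a_i,b_i\in S$; this is clear by inspection, and I expect this small case analysis to be the only genuinely combinatorial piece of the argument.
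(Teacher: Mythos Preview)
Your proposal is correct and follows essentially the same route as the paper's own proof: specialize the construction of Lemma~\ref{thm:DiagramSpecial} by drawing in each head a single pair of transversely crossing arcs labeled $a_i$ and $b_i$ so that the loop word is exactly $(a_i,b_i)$, add transverse arcs along the rope to realize $u_i^{-1}$, and then extend over the complementary disk via Lemma~\ref{thm:BasicObservation}. The paper's proof is the one-sentence observation that this specialization makes each noose proper; you have simply written out the verification of (P1), (P2), and the extension step in full.
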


\begin{proof}
Apply the same argument with the proof of Lemma~\ref{thm:DiagramSpecial}. Note that for each noose 
$N_i$, we draw a horizontal line with label $b_i$ 
and a vertical line with label $a_i$ with orientations in a regular neighborhood of the head such that the intersection word along the loop is $(a_i, b_i)$. Then the noose system is proper.   
\end{proof}

It is mentioned in \cite{FT} without details that a similar result to 
Theorem~\ref{thm:DiagramProper} is obtained by Rourke's {\em standard diagram} \cite{Rourke} for a certain $2$-dimensional C.W. complex. Our argument gives a method of construction of a doodle diagram and a noose system.

\begin{theorem}\label{thm:DiagramGeneral}
Every commutator identity is obtained from a doodle diagram. Namely, 
for any commutator identity in the free group on $S$,   
there exists a colored doodle diagram $D$ and a noose system ${\cal N}$ such that $I(D, {\cal N})$ is the given commutator identity. 
\end{theorem}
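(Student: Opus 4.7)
The plan is to construct, for each factor $w_i$ of the given commutator identity $w_1 \cdots w_m \equiv 1$, a single noose $N_i$ whose intersection word equals $w_i$ as a word, and then to fit these nooses into a global colored doodle diagram on $S^2$ by invoking Lemma~\ref{thm:BasicObservation} on the complement of a neighborhood of the noose system.  The key observation beyond the proofs of Lemma~\ref{thm:DiagramSpecial} and Theorem~\ref{thm:DiagramProper} is that since each $w_i$ represents an element of the commutator subgroup of the free group on $S$, the exponent sum of every letter $a \in S$ in $w_i$ is zero; this is precisely the combinatorial condition needed to fill the head of $N_i$ with labeled oriented arcs realizing $w_i$ along the loop, once we allow those arcs to cross transversely inside the head.

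Concretely, I first prepare $m$ disjoint nooses $N_1, \ldots, N_m$ sharing a common base point, with ropes leaving that point in counterclockwise order and chosen to miss the yet-to-be-constructed diagram, so that the intersection word of $N_i$ is determined by its loop alone.  For each $i$, place points on the loop of $N_i$, in cyclic order starting from the neck, labeled in order by the successive letters of $w_i$.  Because the number of $a^{+1}$-marks equals the number of $a^{-1}$-marks on the loop of $N_i$, I pair them up arbitrarily; for each pair I draw an arc inside the head joining the two marked points, labeled by $a$ and oriented from the $a^{-1}$-endpoint toward the $a^{+1}$-endpoint.  A direct check from the definition of intersection letter shows that this orientation is forced by the required signs at both endpoints.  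A small perturbation places these arcs in general position with only transverse double-point crossings, producing a legitimate portion of a doodle diagram inside each head whose intersection word along the loop of $N_i$ is exactly $w_i$.

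Let $A$ be a regular neighborhood of $N_1 \cup \cdots \cup N_m$ in $S^2$; it is a $2$-disk, and the arcs built inside the heads assemble into a diagram $D \cap A$ over $A$ whose intersection word along $\partial A$ is precisely the word $w_1 w_2 \cdots w_m$, which represents the identity in the free group by hypothesis.  Applying Lemma~\ref{thm:BasicObservation} to the closed complementary disk $S^2 \setminus \mathrm{int}(A)$ yields a family of pairwise disjoint, properly embedded, labeled oriented arcs whose endpoints and labels on $\partial A$ match those of the arcs already constructed inside $A$.  Joining the arcs across $\partial A$ at their shared endpoints produces the desired colored oriented doodle diagram $D$ in $S^2$, and by construction the noose system $\mathcal{N} = (N_1, \ldots, N_m)$ satisfies $I(D, \mathcal{N}) = w_1 \cdots w_m \equiv 1$.

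The main obstacle is the per-noose realization step in the second paragraph: Lemma~\ref{thm:BasicObservation} as stated requires the arcs inside a disk to be pairwise disjoint, which would force the boundary word to represent the identity in the free group, whereas an individual $w_i$ generally does not.  One sidesteps this by permitting crossings inside the head and invoking only the weaker exponent-sum-zero condition on $w_i$, at the cost of a small orientation check; once this is granted, the rest of the argument is a direct globalization in the spirit of Lemma~\ref{thm:DiagramSpecial}.
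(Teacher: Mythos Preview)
Your argument is correct, and it takes a genuinely different route from the paper's.

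The paper fills each head in two stages: given $w_i$ in the commutator subgroup, it first chooses an explicit expression $W=(a_1,b_1)^{u_1}\cdots(a_k,b_k)^{u_k}$ representing the same element, builds a proper sub-noose system $N_1^\ast,\dots,N_k^\ast$ inside the head of $N_i$, and uses the construction of Theorem~\ref{thm:DiagramProper} to realize $W$ on a smaller disk $A^\ast$; it then invokes Lemma~\ref{thm:BasicObservation} once to pass from $W$ to $w_i$ across the annulus $B\setminus A^\ast$, and a second time to fill the exterior of $A$. Your approach replaces this by the simpler observation that membership in the commutator subgroup of a free group is exactly the exponent-sum-zero condition, so the marked points on the loop of $N_i$ can be paired directly and joined by (generically crossing) labeled oriented arcs inside the head; Lemma~\ref{thm:BasicObservation} is then needed only once, for the exterior. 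Your orientation check (arc directed from the $a^{-1}$ mark to the $a^{+1}$ mark) is correct under the paper's sign convention. What the paper's detour buys is an explicit proper sub-noose system inside each head, tying the general case visibly back to the elementary case; what your approach buys is a shorter, more self-contained construction that avoids choosing a commutator decomposition of each $w_i$.
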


\begin{proof}
Let $w_1 \dots w_m \equiv 1$ be a commutator identity. 
Prepare a noose system consisting of $m$ nooses, $N_1, \dots, N_m$.  
Fix $i  \in \{1, \dots, m\}$ and let $w = w_i$ and $N= N_i$. Let $B$ denote the head of the noose $N$. 
Since $w$ represents an element of the commutator subgroup, there exists a word 
$W= (a_1, b_1)^{u_1} \cdots (a_k, b_k)^{u_k}$ for some $k$ where 
$a_j, b_j \in S$ and $u_j \in {\rm Word}(S \cup S^{-1})$ for each $j=1, \dots, k$ 
such that $w$ and $W$ represent the same element in the free group. 
   Prepare a noose system consisting of $k$ nooses 
$N_1^\ast, \dots, N_k^\ast$ in the head $B$ of $N$ whose base point is the neck of $N$. 
Let $A^\ast$ be a regular neighborhood in $B$ of the union of the nooses $N_1^\ast, \dots, N_k^\ast$. 
    Apply the same argument with the proof of Lemma~\ref{thm:DiagramSpecial}  
to the small nooses $N_1^\ast, \dots, N_k^\ast$ and the word 
$W= (a_1, b_1)^{u_1} \cdots (a_k, b_k)^{u_k}$, 
we define a colored  doodle diagram $D$ over $A^\ast$, denoted by $D \cap A^\ast$, such that the intersection word along $\partial A^\ast$ is the word $W$.  Note that $N_1^\ast, \dots, N_k^\ast$ are proper nooses for this $D \cap A^\ast$. 
   Since $W$ and $w$ represent the same element in the free group, using Lemma~\ref{thm:BasicObservation}, 
   we can extend the diagram $D \cap A^\ast$ to a diagram over $B$, denoted by $D \cap B$, by adding some simple arcs with labels in $S$ and with orientations such that the intersection word along $\partial B$, the loop of $N$, is the word $w$.  
For every $i=1, \dots, m$, apply this argument and we construct a doodle diagram over the heads of $N_1, \dots,N_m$.  
   Let $A$ be the union of the heads and $m$ bands along the ropes connecting the heads to a neighborhood of the base point.  
   Now we have a colored doodle diagram over $A$, denoted by $D \cap A$,  such that the intersection word along $\partial A$ is $w_1 \dots w_m$, and the intersection word along $N_i$ is $w_i$ for each $i=1, \dots, m$. 
Since the word $w_1 \dots w_m$ represents the identity element of the free group, 
by Lemma~\ref{thm:BasicObservation}, 
we can extend the diagram $D \cap A$ to a colored doodle diagram $D$ in $S^2$ by adding some simple arcs in the closure of $S^2 \setminus A$.  Then the diagram $D$ and the noose system $N_1, \dots, N_m$ satisfy that $I(D, {\cal N})$ is 
$w_1 \dots w_m \equiv 1$.  
\end{proof}

The method introduced above Lemma~\ref{thm:DiagramSpecial} is sometimes not so effective to obtain a simple doodle diagram.  For example, we can consider immersed arcs with labels and orientations for the heads as in the left side of Figure~\ref{dfigBDab} instead on those in Figure~\ref{dfigBCab}.  The intersection word along the boundary of $A$ is still 
$((a,b),c^a) \, ((c,a),b^c) \, ((b,c),a^b)$, and we can extend it to a colored doodle diagram 
 as in the right hand side of the figure.  
Then the doodle diagram and the noose system yield the same commutator identity.

    \begin{figure}[h]
    \centerline{\epsfig{file=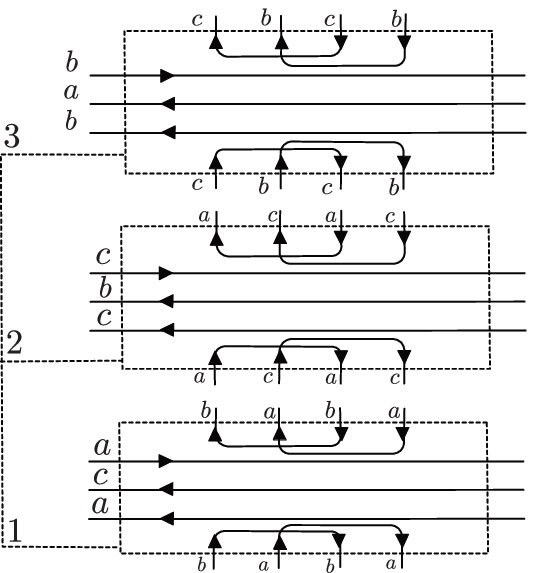, height=5.5cm} 
    \qquad \qquad 
    \epsfig{file=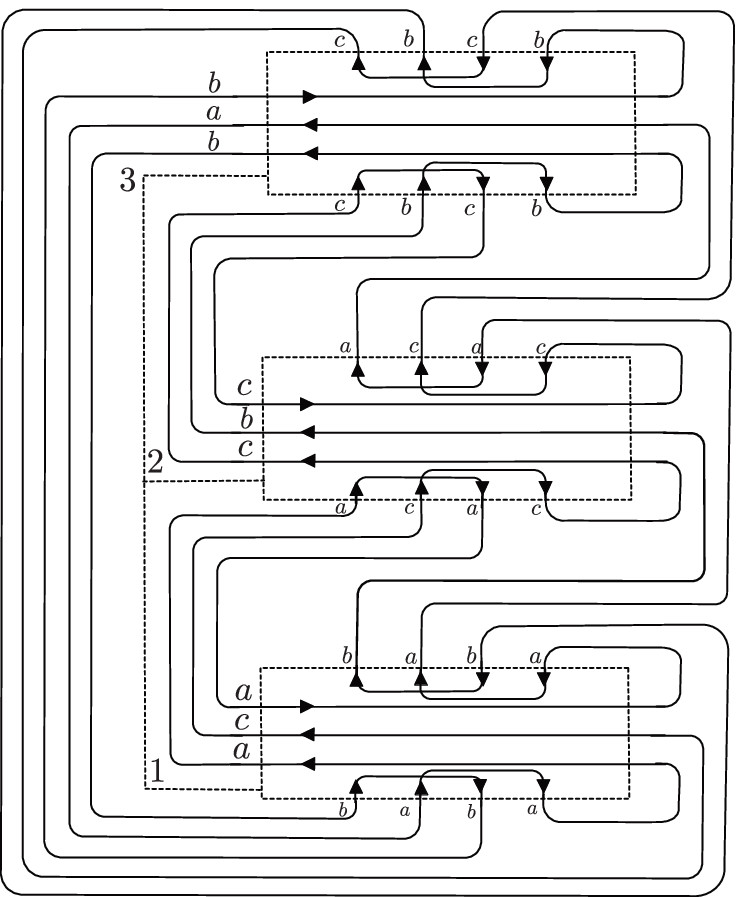, height=8cm} }
    \caption{Construction of a diagram}\label{dfigBDab}
    \end{figure}

Note that the number of crossings of the diagram in the former (Figure~\ref{dfigBCab}) is $36$ and that of the latter (Figure~\ref{dfigBDab}) is $6$.  In this sense the latter one is simpler than the former.  Moreover, the latter diagram is a minimal diagram, i.e., there are no monogons and no bigons.  It is known that a minimal diagram with $6$ crossings is a Borromean doodle diagram \cite{BFKKdoodles}.  By an ambient isotopy, we can deform the diagram with the noose system  in Figure~\ref{dfigBDab} 
into the standard diagram of the Borromean doodle with a noose system as in Figure~\ref{dfigBE}.   
Thus we see that the Hall-Witt identity can be obtained from the Borromean doodle. 
(The labels $b$ and $c$ in Figure~\ref{dfigBE} differ from those in Figure~\ref{dfigBorromean1dfigBorromean2}. When we switch $b$ and $c$ in Figure~\ref{dfigBE}, we have the Hall-Witt identity with letters $b$ and $c$ switched.)

    \begin{figure}[h]
    \centerline{\epsfig{file=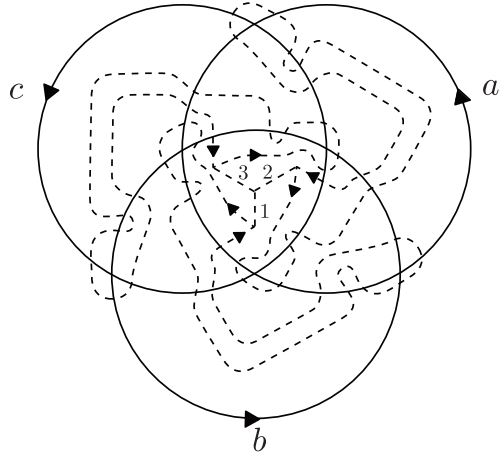, height=6cm} }
    \caption{Construction of a diagram}\label{dfigBE}
    \end{figure}

\section{Equivalence relations on elementary commutator identities}
\label{sect:EquivalnceIdentities}

In this section, we introduce fundamental transformations, including the action of the braid group, and define 
three kinds of equivalence relations on elementary commutator identities: 
strict equivalence $\cong$, equivalence $\simeq$ and weak equivalence $\sim$.  
We show that a colored doodle diagram induces a unique elementary commutator identity up to equivalence $\simeq$ (Theorem~\ref{thm:41})  and that a colored  doodle  induces a unique elementary commutator identity up to equivalence $\sim$ (Theorem~\ref{thm:42}).

We call the transformations (I)--(VII)  listed below {\em fundamental transformations}. 
Let $w_1 \cdots w_m \equiv 1$ be an elementary commutator identity on $S$.

\begin{itemize}

\item[(I)] (cyclic permutation) 
$$ \rho: w_1 w_2 \cdots w_m \equiv 1 \mapsto w_2 \cdots w_m w_1 \equiv 1. $$

\item[(II)] (braid action or Hurwizt  action)  
Let $j \in \{1, \dots, m-1\}$. 
$$ \sigma_j :  w_1 \cdots w_j w_{j+1} \cdots w_m \equiv 1 
\mapsto 
w_1 \cdots w_{j+1}  w_j^{w_{j+1}} \cdots w_m \equiv 1.  $$
The inverse of $\sigma_j$ is given by 
$$ \sigma_j^{-1} :  w_1 \cdots w_j w_{j+1} \cdots w_m \equiv 1 
\mapsto 
w_1 \cdots w_{j+1}^{w_j^{-1}}  w_j \cdots w_m \equiv 1.  $$

\item[(III)] (simultaneous conjugation or global conjugation) 
$$ {\rm conj}(u) :  w_1 w_2 \cdots  w_m \equiv 1 
\mapsto 
w_1^u w_2^u \cdots  w_m^u \equiv 1,  $$
where $u \in {\rm Word}(S \cup S^{-1})$.  

\item[(IV)] (changing local conjugation I) 
Let $i \in \{1, \dots, m\}$. 
$$ w_1 \cdots w_i \cdots w_m \equiv 1 
\mapsto 
w_1  \cdots w_i' \cdots w_m \equiv 1,  $$
where $w_i = (a_i, b_i)^{u_i}$ 
and $w_i' = (a_i, b_i)^{u_i'}$ 
such that $u_i$ and $u_i'$ represent the same element in the free group on $S$.   

\item[(V)] (changing local conjugation II) 
Let $i \in \{1, \dots, m\}$. 
$$ w_1 \cdots w_i \cdots w_m \equiv 1 
\mapsto 
w_1  \cdots w_i' \cdots w_m \equiv 1,  $$
where $w_i = (a_i, b_i)^{u_i}$ 
and $w_i' = (a_i, b_i)^{u_i'}$ 
such that $u_i' =  (a_i, b_i) u_i$ or $u_i' =  (a_i, b_i)^{-1} u_i$.   

\item[(VI)] (insertion/deletion of a trivial commutator) 
Insert or delete $(a,a)^u$ and change the length $m$ by one, 
where $a \in S$ and  $u \in {\rm Word}(S \cup S^{-1})$.  

\item[(VII)] (insertion/deletion of a cancelling pair) 
Insert or delete $(a,b)^u (b,a)^u$ and change the length $m$ by two, 
where $a, b \in S$ and $u \in {\rm Word}(S \cup S^{-1})$.  

\end{itemize}
  
Two elementary commutator identities on $S$ are {\em strictly equivalent}, {\em equivalent}, or {\em weakly equivalent} if they are related by a finite sequence of transformations (IV)--(V), 
(I)--(V), or (I)--(VII) respectively, and  
we denote the equivalence relation by $\cong$, $\simeq$ or $\sim$ respectively.

\begin{theorem}\label{thm:41}
A colored doodle diagram induces a unique elementary commutator identity up to equivalence $\simeq$.  Namely, 
let $D$ be a colored doodle diagram 
and let ${\cal N}$ and ${\cal N}'$  be proper noose systems for $D$.  Then $I(D, {\cal N})  \simeq  I(D, {\cal N}')$.  
\end{theorem}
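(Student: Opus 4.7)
The plan is to connect the two proper noose systems $\mathcal{N}$ and $\mathcal{N}'$ by a finite sequence of local geometric modifications, each corresponding to one of the fundamental transformations (I)--(V), so that $I(D,\mathcal{N}) \simeq I(D,\mathcal{N}')$. The argument proceeds in two stages: first match the combinatorial data using (I) and (II), then match the rope data using (IV) and (V).

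For the first stage, I would apply cyclic permutation (I) together with iterated adjacent braid actions (II)---which together realize the surjection from the braid group onto the symmetric group---to arrange that for every $i$, the nooses $N_i$ and $N_i'$ surround the same crossing $c_i$ of $D$ and appear in the same cyclic order around the base point. The proper condition then pins the head of each $N_i$ (and $N_i'$) uniquely: the neck must sit on the unique segment of the loop where the counterclockwise intersection word starts as $(a_i, b_i) = a_i^{-1} b_i^{-1} a_i b_i$. After an ambient isotopy of $S^2$ fixing $D$, the heads of $N_i$ and $N_i'$ therefore coincide exactly.

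For the second stage, the rope of each $N_i$ is a simple arc from the base point to the neck of $N_i$ lying in the complement of the other nooses. Given the matched cyclic order at the base point and the matched endpoints, an elementary planar-arcs argument shows that such a configuration of disjoint simple arcs is unique up to ambient isotopy of $S^2$ (through simple-arc configurations, not rel $D$). A generic rel-endpoint isotopy between two representatives meets $D$ transversely at finitely many critical moments, each of which is one of two kinds: (a) creation or cancellation of a pair of transverse intersections of opposite sign with a single component of $D$, which changes $u_i$ by multiplication by $xx^{-1}$ in the free group and hence corresponds to transformation (IV); and (b) a full sweep of the rope past the four strands of $D$ emerging from the head of $N_i$, which prepends $(a_i, b_i)^{\pm 1}$ to $u_i^{-1}$ and hence corresponds to transformation (V).

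The main obstacle is to exclude a third potential kind of critical moment, namely a sweep of the rope of $N_i$ past the four strands emerging from the head of a different noose $N_j$, which would produce an unwanted factor of $(a_j, b_j)^{\pm 1}$ in $u_i$ not realizable by (I)--(V). I would rule this out by observing that any such sweep would either change the cyclic order of the ropes at the base point or force an intersection with the noose $N_j$, both of which are forbidden once the combinatorial data has been matched in the first stage. Assembling the moves from both stages yields the required $\simeq$-equivalence.
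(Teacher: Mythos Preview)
Your Stage~2 rests on a claim that is false once $m \ge 3$: a configuration of $m$ disjoint simple arcs on $S^2$ from a common base point to $m$ prescribed points, with a prescribed cyclic order at the base point, is \emph{not} unique up to ambient isotopy. Concretely, take $m=3$ and apply the pure braid $\tau_1^{2}$ to the standard noose system $\mathcal{N}^0$. The resulting system has the same heads, the same bijection between noose indices and crossings, and the same cyclic order at the base point as $\mathcal{N}^0$, yet it is not isotopic to $\mathcal{N}^0$ relative to the four marked points. Indeed, isotopy classes of such arc systems form a torsor for the pure mapping class group ${\rm PMod}(S_{0,m+1})$, which by the Birman exact sequence is free of rank $2$ when $m=3$ and is infinite for all $m\ge 3$. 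Thus matching only the symmetric-group image of the braid in Stage~1 leaves an entire pure-braid ambiguity that (IV) and (V) cannot absorb. Your ``main obstacle'' paragraph diagnoses the symptom correctly---a sweep of the rope of $N_i$ past the head of some $N_j$---but the proposed resolution fails: a nontrivial pure braid forces exactly such sweeps, and they cannot be avoided while keeping the ropes embedded and disjoint.

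The paper's proof sidesteps this by using the full braid group rather than its symmetric quotient. Lemma~\ref{lem:standardA} shows that each generator $\tau_j$ of ${\rm Mod}(D_+,Q)$ corresponds precisely to a transformation of type~(II), so any two proper noose systems with the same base point differ by a word in the $\tau_j$'s and hence yield $\simeq$-equivalent identities (Lemma~\ref{lem:standardB}); the pure-braid discrepancy you tried to eliminate with (IV) and (V) is instead absorbed by further applications of (II). There is also a second, smaller gap in your plan: you never invoke~(III), but when the base points of $\mathcal{N}$ and $\mathcal{N}'$ lie in different regions of $S^2\setminus D$ no isotopy fixing $D$ can make them coincide, and the paper uses a simultaneous conjugation~(III) to push the base point across a strand of $D$.
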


\begin{theorem}\label{thm:42}
A colored doodle  induces a unique elementary commutator identity up to equivalence $\sim$. Namely, 
let  $D$ and $D'$ be colored doodle diagrams representing the same colored doodle, and   
let ${\cal N}$ and ${\cal N}'$ be proper noose systems for them.   
Then $I(D, {\cal N})  \sim  I(D', {\cal N}')$.  
\end{theorem}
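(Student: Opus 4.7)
\emph{Proof plan.} The plan is to reduce via Theorem~\ref{thm:41} to a local analysis of the generating moves of doodle equivalence, namely ambient isotopy of $S^2$ together with $H_1^{\pm}$ and $H_2^{\pm}$. Since the relation $\simeq$ refines $\sim$, Theorem~\ref{thm:41} allows us to fix any convenient proper noose system for a given diagram without changing the $\sim$-class of the resulting identity. Hence it suffices to show that whenever $D$ and $D'$ are related by a single generating move, we can produce proper noose systems ${\cal N}$ for $D$ and ${\cal N}'$ for $D'$ with $I(D,{\cal N})\sim I(D',{\cal N}')$. The move takes place inside a small disk $B\subset S^2$, so we place the base point outside $B$ and insist that ${\cal N}$ and ${\cal N}'$ agree outside a neighborhood of $B$.

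The first step is to handle ambient isotopies. An isotopy that does not affect the noose system leaves the intersection words literally unchanged. Passing an arc of $D$ across a rope changes the conjugator $u_i$ by the insertion or deletion of a pair of cancelling letters, which is absorbed by move (IV). Rotating the ropes about the base point is realized by cyclic permutation (I) together with the braid action (II): an elementary swap of two adjacent ropes drags one rope around the head of its neighbor and thereby conjugates the corresponding commutator by that neighbor's intersection word, which is exactly the Hurwitz generator $\sigma_j$. Move (V), which left-multiplies the conjugator by the commutator itself, corresponds geometrically to homotoping a rope once around its own head, while move (III) accounts for transporting the base point along a closed loop in $S^2$ that meets $D$ transversely.

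For the moves $H_1^{\pm}$ and $H_2^{\pm}$, the noose systems are prescribed so as to differ only by extra nooses inside $B$. In the case of $H_1^{+}$, introduce one additional noose around the new monogon crossing; its loop crosses the same component twice with opposite coorientations, so its intersection word has the form $(a,a)^u$, and move (VI) inserts or deletes it while (I) and (II) place it in the correct position. In the case of $H_2^{+}$, introduce two adjacent additional nooses around the two new crossings of the bigon; their intersection words are, after a small application of (IV) to synchronize their conjugators, a cancelling pair $(a,b)^u(b,a)^u$ to which move (VII) applies. The deletion moves $H_1^{-}$ and $H_2^{-}$ are handled by running these constructions in reverse.

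The main obstacle is the bookkeeping in the isotopy step: verifying that every elementary interaction between an ambient isotopy and the noose system can be realized by a finite composition of the transformations (I)--(V). This requires decomposing an arbitrary ambient isotopy of $S^2$ into finitely many elementary pieces --- an arc of $D$ sliding across a rope, two ropes swapping past each other near the base point, a rope being dragged once around a head --- and matching each piece with its combinatorial counterpart on the list. Once these correspondences are established, the conclusion of Theorem~\ref{thm:42} follows by concatenating the resulting local sequences of moves across the finite chain of generating moves relating $D$ to $D'$, with Theorem~\ref{thm:41} absorbing any remaining discrepancy between the prescribed noose systems and the given ones.
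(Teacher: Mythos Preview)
Your proposal is correct and follows essentially the same route as the paper: reduce via Theorem~\ref{thm:41} to the generating moves, then realize $H_1^{\pm}$ by transformation (VI) and $H_2^{\pm}$ by transformation (VII) using extra nooses around the monogon/bigon crossings.

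The one noteworthy difference is that what you call ``the main obstacle'' --- the bookkeeping for ambient isotopies --- is not an obstacle at all in the paper's argument. Rather than fixing ${\cal N}$ and letting $D$ slide across it, the paper simply carries the noose system along with the ambient isotopy: if $f$ is the isotopy then $f({\cal N})$ is a proper noose system for $f(D)$ with $I(f(D),f({\cal N}))=I(D,{\cal N})$ literally, and Theorem~\ref{thm:41} then lets you replace $f({\cal N})$ by any other proper noose system for $f(D)$. This one line replaces your entire decomposition into elementary isotopy pieces. Similarly, in the $H_1^{-}$ and $H_2^{-}$ cases the paper avoids your auxiliary uses of (I), (II), (IV) by building the noose system ${\cal N}=(N_1,\dots,N_m)$ for $D$ from the outset with the extra nooses in positions $1$ (resp.\ $1,2$) and with parallel ropes, so that ${\cal N}'=(N_2,\dots,N_m)$ (resp.\ $(N_3,\dots,N_m)$) is already a proper noose system for $D'$ and the identities differ by a single application of (VI) (resp.\ (VII)). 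Your version works, but is longer than necessary.
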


We devote this section to proving these theorems.  

Let $O$ be the origin of $\R^2 = S^2 \setminus \{\infty\}$. For a while, we consider a case that a colored doodle diagram $D$ avoids $O$ and $\infty$, and the base points of noose systems are $O$.  

Let $\Sigma$ be a set of $m$ points of $S^2$ avoiding $O$ and $\infty$.  
A {\em proper noose for $\Sigma$} means a noose $N$ such that 
$N \cap \Sigma$  is an interior point of the head.  
A {\em proper noose system for $\Sigma$} is a noose system 
consisting of  proper nooses for $\Sigma$ such that 
every point of $\Sigma$ is contained in a head.  
We say that  two proper nooses $N$ and $N'$ for $\Sigma$ with root $O$ 
 are {\it homotopic} or {\em homotopic in $S^2$ with respect to $\Sigma$} if there is a homotopy of embeddings  $f_s : X =  D^2 \cup I' \to S^2 $ ($s \in [0,1]$)  satisfying the following. 
\begin{itemize}
\item $N= f_0(X)$ and $N'= f_1(X)$.  
\item  For each $s$, $f_s((2,0)) =O$.  
\item For each $s$, $f_s(X) \cap \Sigma$ is an interior point of the head $f_s(D^2)$. 
\end{itemize}
Here $f_s(X)$ may intersect with $\infty$, although we are assuming that $f_0(X)$ and $f_1(X)$ are away from $\{\infty\}$. 

Two proper noose systems ${\cal N}= (N_1, \dots, N_m)$ and ${\cal N}'= (N_1', \dots, N_m')$ for $\Sigma$ with base point $O$ 
 are {\it homotopic} or {\em homotopic in $S^2$ with respect to $\Sigma$} if each $N_i$ is homotopic to $N_i'$.

Let $D$ be a doodle diagram avoiding $O$ and $\infty$, and let $\Sigma(D)$ be the set of crossings.  
Note that a proper noose system for $D$ is a proper noose system for $\Sigma(D)$.  
The converse is not true in general. However, when a proper noose system for $\Sigma(D)$ with base point $O$ is given, moving the nooses by a homotopy in $S^2$ with respect to $\Sigma(D)$ we obtain a (non-unique) proper noose system for $D$.

Let $D$ be a colored doodle diagram avoiding $O$ and $\infty$, and let  
 ${\cal N}= (N_1, \dots, N_m)$ be a proper noose system for $\Sigma(D)$ with base point $O$. 
A {\em commutator identity obtained from $D$ by using ${\cal N}$} means a commutator identity  
$I(D, {\cal N}')$ obtained from $D$ by using a proper noose system ${\cal N}'$ for $D$ which is homotopic to ${\cal N}$.  
It is denoted by 
$$I(D, {\cal N}).$$

\begin{lemma}\label{lem:homotopychange}
In the situation above, $I(D, {\cal N})$ is well-defied up to strict equivalence $\cong$. Precisely speaking, 
let ${\cal N'}= (N_1', \dots, N_m')$ and ${\cal N}''= (N_1'', \dots, N_m'')$ be proper noose systems for $D$ with base point $O$  such that they are 
homotopic  to  ${\cal N}= (N_1, \dots, N_m)$ in $S^2$ with respect to $\Sigma(D)$.  
Then the commutator identities $I(D, {\cal N}')$  and $I(D, {\cal N}'')$ are strictly equivalent.  
\end{lemma}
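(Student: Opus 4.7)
The plan is to lift the question to the fundamental group $\pi_1(S^2 \setminus \Sigma(D), O)$, observe that the intersection word descends to a homomorphism into the free group $F(S)$ on $S$, and translate the resulting algebraic equality into the local moves (IV)--(V).

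First I would set up the homomorphism $\Phi : \pi_1(S^2 \setminus \Sigma(D), O) \to F(S)$ sending (the class of) a transverse loop to its intersection word. Well-definedness is standard: a generic based homotopy in $S^2 \setminus \Sigma(D)$ modifies the intersection word only by insertions or deletions of cancelling pairs $x x^{-1}$ or $x^{-1} x$ for $x \in S$, which are trivial in $F(S)$. By hypothesis, each $N_i'$ is homotopic to $N_i''$ through proper nooses for $\Sigma(D)$ with fixed root $O$; composing with the rope--loop--rope parametrization gives a based homotopy of loops $\ell_{N_i'} \simeq \ell_{N_i''}$ in $S^2 \setminus \Sigma(D)$, and therefore $[\ell_{N_i'}] = [\ell_{N_i''}]$ in $\pi_1(S^2 \setminus \Sigma(D), O)$. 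The single crossing $p_i$ trapped in the head during the homotopy is the same for $N_i'$ and $N_i''$ since it is a continuous function of the homotopy parameter valued in the finite set $\Sigma(D)$, and consequently the commutator labels $(a_i, b_i)$ are the same on both sides.

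Next I would extract the algebraic consequence. Write $[\ell_{N_i'}] = \alpha_i' \gamma_{p_i} (\alpha_i')^{-1}$ and $[\ell_{N_i''}] = \alpha_i'' \gamma_{p_i} (\alpha_i'')^{-1}$ in the fundamental groupoid, where $\gamma_{p_i}$ is the small counterclockwise loop about $p_i$ based at a fixed point near $p_i$, and $\alpha_i', \alpha_i''$ are the ropes of the two nooses suitably extended into the head. Since $\gamma_{p_i}$ is a primitive element of the free group $\pi_1(S^2 \setminus \Sigma(D), O)$, its centralizer is infinite cyclic, so the equality of the two conjugates forces $\alpha_i' = \alpha_i'' \, \gamma_{p_i}^{k_i}$ in the groupoid for some integer $k_i$. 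Applying $\Phi$ and using $\Phi(\gamma_{p_i}) = (a_i, b_i)$ yields the key identity $u_i' = (a_i, b_i)^{k_i} \, u_i''$ in $F(S)$, which reduces to $u_i' = u_i''$ in the trivial case $a_i = b_i$. To realise the strict equivalence $w_i'' \cong w_i'$, I would apply transformation (V) exactly $|k_i|$ times (with sign matching that of $k_i$) to $w_i'' = (a_i, b_i)^{u_i''}$, formally prepending $(a_i, b_i)^{k_i}$ to the conjugator, and then apply (IV) to rewrite the resulting conjugator letter-by-letter as the word $u_i'$, using that the two represent the same element of $F(S)$. Performing this independently at each index $i$ gives $I(D, {\cal N}') \cong I(D, {\cal N}'')$.

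The main difficulty I expect lies in the clean verification that $\Phi$ descends to $\pi_1(S^2 \setminus \Sigma(D), O)$. Beyond the standard transverse $x x^{-1}$ birth/death events on the rope or loop, a generic noose homotopy can also have non-generic moments when the neck (the corner of the noose boundary where the rope meets the loop) sweeps across a component of $D$; at such an event letters are transferred between the rope and loop portions of the intersection word so that $u$ and $L$ each change individually, yet the combined element $u^{-1} L u$ of $F(S)$ is preserved because $\ell_{N_i}$ stays inside $S^2 \setminus \Sigma(D)$ throughout. This corner-event bookkeeping, together with the separate treatment of the trivial-commutator case $a_i = b_i$ (where the free-group invariance alone is insufficient and one really needs the lift to $\pi_1$), is the one routine but not immediate piece of the argument.
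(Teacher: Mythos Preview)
Your argument is correct and reaches the same conclusion, but by a more algebraic route than the paper. The paper argues geometrically: after shrinking the heads so that $D\cap f_s(D^2)$ remains the standard cross throughout the homotopy, the loop-word is constant, and an arbitrary noose homotopy is decomposed into pieces where either the neck stays away from $D$ (so the rope-words agree in $F(S)$, a move (IV)) or the neck winds once around the trapped crossing with the rest of the noose fixed (a move (V)). You instead lift to $\pi_1(S^2\setminus\Sigma(D),O)$, invoke that the meridian $\gamma_{p_i}$ is primitive in this free group so its centralizer is cyclic, and extract the winding integer $k_i$ algebraically before applying $\Phi$. The paper's approach is more elementary and keeps the correspondence between homotopy events and moves (IV)/(V) visibly one-to-one; yours avoids having to normalise the homotopy and explains conceptually why a single integer governs the discrepancy at each noose. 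One small comment: the concern in your third paragraph is misplaced, since $\Phi$ is applied to the whole based loop $\ell_N$ and neck-crossing events are invisible to it---its descent to $\pi_1$ really is just the standard birth/death argument. The place where care is genuinely needed in your approach is rather in making the groupoid decomposition $[\ell_{N_i}]=\alpha_i\gamma_{p_i}\alpha_i^{-1}$ precise, i.e.\ fixing a common auxiliary basepoint near $p_i$ for both nooses and relating $\Phi(\alpha_i)$ to the actual rope-word $u_i^{-1}$.
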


\begin{proof}
Note that ${\cal N'}$ is homotopic to ${\cal N}''$ in $S^2$ with respect to $\Sigma(D)$.  Fix $i \in \{1, \dots, m\}$ and consider a homotopy $f_s: X \to S^2$ ($s \in [0,1]$)  moving $N_i'$ to $N_i''$.  
Taking such a homotopy such that the heads $f_s(D^2)$ are small, we may assume that for each $s \in [0,1]$, the intersection $f_s(D^2) \cap D$ is the union of two embedded arcs in the head $f_s(D^2)$ intersecting each other on a single crossing of $D$.  Note that  
the intersection word along the loop of $N_i'$ is the same with that of $N_i''$.  
If the neck of $N_i'$ is disjoint from the diagram $D$ through the 
homotopy moving $N_i'$ to $N_i''$, then  the intersection word along the  rope of $N_i'$ and that of $N_i''$ represent the same element in the free group on $S$, since the homotopy avoids $\Sigma(D)$.  Thus, the commutator identities are related by transformations (IV).   
If the homotopy moves the neck of $N_i'$ around the crossing of $D$ 
several times and keeps the remaining part of the noose fixed, then the commutator  identities are related by transformations  (V).  
In general case, a homotopy moving $N_i'$ to $N_i''$ is combined with these homotopies, and hence the commutator  identities are related by transformations  (IV) and (V).  
\end{proof}

Let $S^1$ be the unit circle of $\R^2$ and let $Q = \{ q_1, \dots, q_m\}$ be a fixed $m$ points 
on $S^1$ evenly arranged counterclockwise  in this order.  

Let $D_-$ be a small round $2$-disk in $S^2 = \R^2 \cup \{ \infty\}$ with center $O$, and let $D_+$ be the complementary $2$-disk in $S^2$ which is the closure of $S^2 \setminus D_-$.  

Consider the mapping class group ${\rm Mod}(D_+, Q)$, that is the group of isotopy classes of self-homeomorphisms of $D_+$ sending $Q$ to itself whose restriction to the boundary of $D_+$ is the identity. 
(We often use the same symbol for an element of ${\rm Mod}(D_+, Q)$ and its representative. 
For two elements $g$ and $h$, the product $gh$ is defined by the composition $g \circ h$.) 
 It is well known that the mapping class group is identified with the $m$-braid group on $Q$ in $D_+$, and it is generated by $\tau_1, \dots, \tau_{m-1}$ where $\tau_i$ is a \lq\lq disk twist\rq\rq  which rotates the arc on $S^1$ between $q_i$ and $q_{i+1}$ counterclockwise in its regular neighborhood (cf. \cite{Birman}). 

Let  $g$ be an element of ${\rm Mod}(D_+, Q)$.  
For a proper noose system ${\cal N} =(N_1, \dots, N_m)$ for $Q$ with base point $O$, we denote by $g({\cal N})$ a proper noose system $(g(N_1), \dots, g(N_m))$ for $Q$ with base point $O$, which is well-defined up to  isotopy of $S^2$ keeping $D_-$ and $\Sigma$ fixed pointwise. In particular, it is well-defined, as a proper noose system,  up to homotopy in $S^2$ with respect to $\Sigma$.  
For a doodle diagram $D$ avoiding $O$ and $\infty$, we denote by $g(D)$ a doodle diagram obtained from $D$ by $g$, which is well-defined up to isotopy of $S^2$ keeping $D_-$ and $\Sigma$ fixed pointwise.  

A {\em standard noose system} for $Q$ is a proper noose system ${\cal N}^0 = 
(N_1^0, \dots, N_m^0)$ for $Q$ with base point $O$ such that for each $i$, the head of $N_i^0$ is a small  round $2$-disk with center $q_i$ and the rope is a radius connecting $O$ to $N_i^0$. See Figure~\ref{dfigStandard} where $m=6$. 

    \begin{figure}[h]
    \centerline{\epsfig{file=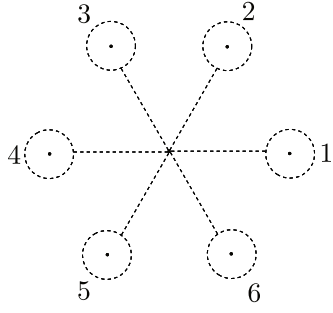, height=4cm} }
    \caption{A standard noose system for $Q$}\label{dfigStandard}
    \end{figure}

\begin{lemma}\label{lem:identityA} 
Let $D$ be a colored doodle diagram avoiding $O$ and $\infty$  with $\Sigma(D) =Q$, and   
let ${\cal N}$ be a proper noose system for $Q$ with base point $O$.  For any $g \in {\rm Mod}(D_+, Q)$, we have  
$$ I(g(D), g({\cal N}))  \cong  I(D, {\cal N})   \quad \mbox{and} \quad 
I(D, g({\cal N})) \cong I(g^{-1}(D), {\cal N}).$$ 
\end{lemma}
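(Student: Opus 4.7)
The plan is to observe that the first equation, $I(g(D), g({\cal N})) \cong I(D, {\cal N})$, is essentially the homeomorphism invariance of intersection words, and then to deduce the second equation by substituting $g^{-1}(D)$ for $D$ in the first.

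To establish the first equation, I would choose a proper noose system ${\cal N}' = (N_1', \dots, N_m')$ for $D$ itself (not merely for $\Sigma(D)$) that is homotopic to ${\cal N}$ in $S^2$ with respect to $\Sigma(D) = Q$; such an ${\cal N}'$ exists by the remark preceding Lemma~\ref{lem:homotopychange}. By definition, $I(D, {\cal N})$ is represented by $I(D, {\cal N}')$, read off from intersection words of the $N_i'$ with $D$. Regarding $g$ as a self-homeomorphism of $S^2$ via extension by the identity on $D_-$, the pair $(D, {\cal N}')$ is carried to $(g(D), g({\cal N}'))$. Here $g({\cal N}')$ is a proper noose system for $g(D)$, since conditions (P1) and (P2) are preserved by homeomorphisms, and it is homotopic to $g({\cal N})$ in $S^2$ with respect to $\Sigma(g(D))$ because any homotopy from ${\cal N}'$ to ${\cal N}$ pushes forward under $g$ to a homotopy from $g({\cal N}')$ to $g({\cal N})$. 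Since $g$ preserves labels and orientations, the intersection letter of the rope or loop of $g(N_i')$ with $g(D)$ at each intersection point is identical to the corresponding intersection letter of $N_i'$ with $D$. Therefore the intersection words agree verbatim, so $I(g(D), g({\cal N}')) = I(D, {\cal N}')$ as elementary commutator identities, and applying Lemma~\ref{lem:homotopychange} twice yields
$$I(g(D), g({\cal N})) \cong I(g(D), g({\cal N}')) = I(D, {\cal N}') \cong I(D, {\cal N}).$$

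For the second equation, apply the first to the colored doodle diagram $D' := g^{-1}(D)$, whose crossing set is $g^{-1}(Q) = Q$ because $g$ permutes $Q$. This gives $I(g(D'), g({\cal N})) \cong I(D', {\cal N})$, which upon substituting $g(D') = D$ becomes $I(D, g({\cal N})) \cong I(g^{-1}(D), {\cal N})$. The only mild obstacle lies in the bookkeeping around the well-definedness conventions, in particular verifying that $g({\cal N}')$ is indeed homotopic to $g({\cal N})$ in $S^2$ with respect to $\Sigma(g(D))$; once this routine transport of homotopies across the homeomorphism $g$ is noted, both equivalences reduce to the tautological fact that intersection words are invariant under homeomorphisms of $S^2$ that preserve labels and orientations.
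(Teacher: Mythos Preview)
Your proof is correct and follows essentially the same approach as the paper: the first equivalence is the tautological invariance of intersection words under the (orientation-preserving) homeomorphism $g$, made precise via Lemma~\ref{lem:homotopychange}, and the second is obtained from the first by substituting $g^{-1}(D)$ for $D$. The paper's proof compresses your argument into two sentences, but the content is identical.
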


\begin{proof}
The first relation is trivial by definition and Lemma~\ref{lem:homotopychange}.  
The second one is obtained from the first by replacing $D$ with $g^{-1}(D)$.   
\end{proof}

\begin{lemma}\label{lem:standardA} 
Let $D$ be a colored doodle diagram avoiding $O$ and $\infty$ with $\Sigma(D) =Q$.  
Let ${\cal N}^0 = (N_1^0, \dots, N_m^0)$ be the standard noose system for $Q$.  
\begin{itemize} 
\item[(1)] 
$$ I(D, \tau_j^{\epsilon}({\cal N}^0)) \cong \sigma_j^{-\epsilon} I(D, {\cal N}^0),$$ 

\item[(2)] 
$$ I(D,  \tau_{j_n}^{\epsilon_n} \dots \tau_{j_1}^{\epsilon_1}({\cal N}^0))  
\cong \sigma_{j_1}^{-\epsilon_1} \cdots  \sigma_{j_n}^{-\epsilon_n} I(D, {\cal N}^0).$$ 

\end{itemize}
\end{lemma}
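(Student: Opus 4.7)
The plan is to prove part~(1) by a local analysis of the half-twist and then deduce part~(2) via an anti-homomorphism computation. For~(1), by Lemma~\ref{lem:homotopychange} the identity $I(D, \tau_j^{\epsilon}(\mathcal{N}^0))$ is well-defined up to $\cong$ by any representative of $\tau_j^{\epsilon}(\mathcal{N}^0)$ modulo homotopy rel $\Sigma(D) = Q$. Since $\tau_j^{\epsilon}$ is supported in a regular neighborhood $V$ of the arc between $q_j$ and $q_{j+1}$, and $V$ meets $\Sigma(D)$ only in those two points, $\tau_j^{\epsilon}(N_i^0) = N_i^0$ for $i \neq j, j+1$; only the nooses $N_j^0$ and $N_{j+1}^0$ are modified. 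Passing to the based fundamental group $\pi_1(S^2 \setminus Q, O)$ and writing $x_i$ for the class of the $i$-th standard noose traversal $\ell_{N_i^0}$, a direct inspection of the half-twist inside $V$ yields the classical Artin formula: for $\epsilon = +1$, $\tau_j(x_j) = x_j x_{j+1} x_j^{-1}$, $\tau_j(x_{j+1}) = x_j$, and $\tau_j(x_i) = x_i$ otherwise (the $\epsilon = -1$ formula is the inverse).

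Reading intersection letters along a transverse path is additive under concatenation and invariant under homotopy in $S^2 \setminus \Sigma(D)$ modulo free reduction, so it descends to a group homomorphism $\phi \colon \pi_1(S^2 \setminus Q, O) \to F(S)$ with $\phi(x_i) = w_i$, where $F(S)$ is the free group on $S$. Applying $\phi$ to the new generators in the $j$-th and $(j+1)$-th slots yields, as elements of the free group, $w_j w_{j+1} w_j^{-1} = w_{j+1}^{w_j^{-1}}$ and $w_j$, which are precisely the entries produced by $\sigma_j^{-1}$ acting on $I(D, \mathcal{N}^0)$. Transformation~(IV), which permits replacing the conjugating word $u_i$ in an entry $(a_i, b_i)^{u_i}$ by any word representing the same element of $F(S)$, then brings the actual intersection-word tuple into the literal form produced by $\sigma_j^{-\epsilon}$, proving~(1).

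For part~(2), a direct induction using~(1) fails, because applying the automorphism $\tau_j$ entrywise to a non-standard tuple does not agree with the formal Hurwitz transformation $\sigma_j^{-1}$ on that tuple. Instead I would compute the full induced automorphism of $\pi_1(S^2 \setminus Q, O)$ for $g = \tau_{j_n}^{\epsilon_n} \cdots \tau_{j_1}^{\epsilon_1}$ at once on the standard tuple. Defining $T(h)$ by $(h_*(x_1), \ldots, h_*(x_m)) = T(h) \cdot (x_1, \ldots, x_m)$, one checks that $T$ is an \emph{anti}-homomorphism, $T(hh') = T(h') T(h)$: applying an automorphism entrywise to a tuple amounts to formally substituting $x_k \mapsto h_*(x_k)$ in each entry, and this commutes with any Hurwitz transformation (both being built from the free-group operations). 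Combined with $T(\tau_j^{\epsilon}) = \sigma_j^{-\epsilon}$ from~(1), iteration gives $T(g) = \sigma_{j_1}^{-\epsilon_1} \cdots \sigma_{j_n}^{-\epsilon_n}$, and applying $\phi$ and transformation~(IV) as in~(1) then yields~(2).

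The main obstacle is the local verification in~(1) that the counterclockwise half-twist $\tau_j$ induces the Artin formula on $(x_j, x_{j+1})$ with sign matching $\sigma_j^{-1}$ rather than $\sigma_j$. Although classical, this requires careful bookkeeping of the conventions of the paper --- the counterclockwise orientation of noose loops, the direction of the half-twist $\tau_j$, and the intersection-letter sign rule of Figure~\ref{dfigIWadfigIWb} --- so that the correspondence $\tau_j^{\epsilon} \leftrightarrow \sigma_j^{-\epsilon}$ comes out with the correct exponent.
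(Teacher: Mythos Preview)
Your proof is correct. Part~(1) is essentially the paper's argument, recast in the language of the Artin action on $\pi_1$: the paper simply reads the new intersection words $w_{j+1}^{w_j^{-1}}$ and $w_j$ off Figure~\ref{dfigCab}, while you obtain the same result via the homomorphism $\phi$. The genuine difference is in part~(2). The paper does not argue via an anti-homomorphism $T$; instead it invokes Lemma~\ref{lem:identityA} to rewrite $I(D,\tau_j^{\epsilon}({\cal N}^0)) \cong I(\tau_j^{-\epsilon}(D),{\cal N}^0)$, so that (1) becomes the statement $I(\tau_j^{-\epsilon}(D),{\cal N}^0)\cong\sigma_j^{-\epsilon}I(D,{\cal N}^0)$, valid for \emph{every} colored diagram $D$ with $\Sigma(D)=Q$. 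A direct induction on the length of the word in the $\tau_{j_k}^{\epsilon_k}$'s then succeeds, because at each step the noose system stays standard while $D$ absorbs one more twist. Your route---showing that entrywise application of an automorphism commutes with Hurwitz moves, hence $h\mapsto T(h)$ reverses composition---is a valid algebraic alternative that avoids Lemma~\ref{lem:identityA} altogether; the paper's route is slightly more economical in that it reuses an already-established lemma and sidesteps any discussion of whether $T$ is well defined (a point you should be careful about if you work in $\pi_1(S^2\setminus Q)$ rather than $\pi_1(D_+\setminus Q)$, since the Hurwitz action on tuples in the former need not be free).
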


\begin{proof}
(1) 
Suppose that $\epsilon=1$.  
The  $j$th and the $(j+1)$st nooses of ${\cal N}_0$ are as in the left of Figure~\ref{dfigCab} and those 
of $\tau_j({\cal N}^0)$ are as in the right  of the figure, up to homotopy with respect to $Q$.  
If the intersection words of the $j$th and the $(j+1)$st nooses of ${\cal N}_0$ 
 are $w_j$ and $w_{j+1}$ then those of $g({\cal N}^0)$ are $w_{j+1}^{w_j^{-1}}$ and $w_j$.  
 Similarly, when $\epsilon =-1$, they are $w_j$ and $w_{j+1}^{w_j}$.  
 Thus, $I(D, \tau_j^{\epsilon}({\cal N}^0)) \cong 
 \sigma_j^{-\epsilon} I(D, {\cal N}^0)$. 
This completes the proof of (1).   

    \begin{figure}[h]
    \centerline{\epsfig{file=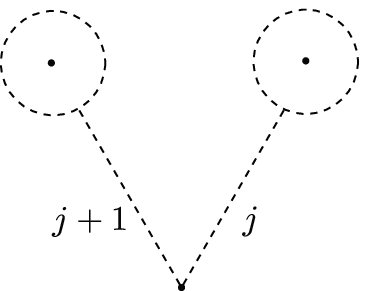, height=3.0cm} 
    \qquad \qquad 
    \epsfig{file=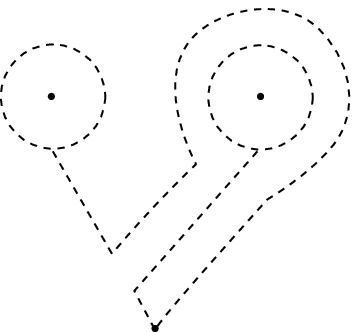, height=3.5cm} }
    \vspace*{8pt}
    \caption{The $j$th and $(j+1)$st   nooses of  ${\cal N}_0$, and those of $\tau_j({\cal N}_0)$}\label{dfigCab}
    \end{figure}

(2) 
By Lemma~\ref{lem:identityA} we have $  
I(D, \tau_j^{\epsilon}({\cal N}^0)) \cong I( \tau_j^{-\epsilon}(D), {\cal N}^0)$.  
Combining this and the equivalence  in (1), we obtain   
$$
 I( \tau_j^{-\epsilon}(D), {\cal N}^0) \cong 
\sigma_j^{-\epsilon} I(D, {\cal N}^0)
$$
which holds for any colored doodle diagram $D$ avoiding $O$ and $\infty$ with $\Sigma(D)=Q$.  
Applying this inductively, we have 
\begin{eqnarray*}
I(\tau_{j_1}^{-\epsilon_1} \tau_{j_2}^{-\epsilon_2} \dots \tau_{j_n}^{-\epsilon_n}(D), {\cal N}^0)   
& \cong &
 \sigma_{j_1}^{-\epsilon_1} I(\tau_{j_2}^{-\epsilon_2} \dots \tau_{j_n}^{-\epsilon_n}(D), {\cal N}^0)   \\ 
& \cong &
 \sigma_{j_1}^{-\epsilon_1} \sigma_{j_2}^{-\epsilon_2} 
 I(\tau_{j_3}^{-\epsilon_3} \dots \tau_{j_n}^{-\epsilon_n}(D), {\cal N}^0)   \\ 
  & \cong & \cdots \\ 
 & \cong &
 \sigma_{j_1}^{-\epsilon_1} \sigma_{j_2}^{-\epsilon_2} \dots \sigma_{j_n}^{-\epsilon_n}
 I(D, {\cal N}^0).
\end{eqnarray*}
By Lemma~\ref{lem:identityA} again, we have 
$$ I(D,  \tau_{j_n}^{\epsilon_n} \dots \tau_{j_1}^{\epsilon_1}({\cal N}^0))  
\cong \sigma_{j_1}^{-\epsilon_1}  \cdots  \sigma_{j_n}^{-\epsilon_n} I(D, {\cal N}^0).$$ 
\end{proof}

\begin{lemma}\label{lem:standardB} 
Let $D$ be a colored doodle diagram avoiding $O$ and $\infty$  with $\Sigma(D) =Q$.  
Let ${\cal N}$ and ${\cal N}'$ be proper noose systems for $Q$ with base point $O$.  Then 
$I(D, {\cal N}) \simeq I(D, {\cal N}')$.  
\end{lemma}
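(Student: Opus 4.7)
The plan is to reduce every proper noose system to the standard one ${\cal N}^0$ by the action of the mapping class group ${\rm Mod}(D_+, Q)$, and then invoke Lemma~\ref{lem:standardA}. The central ingredient is a transitivity claim: \emph{for any proper noose system ${\cal N}$ for $Q$ with base point $O$, there exists $g \in {\rm Mod}(D_+, Q)$ such that $g({\cal N}^0)$ is homotopic to ${\cal N}$ in $S^2$ with respect to $Q$.} To establish it, I would first apply an ambient isotopy fixing $Q$ so that, inside $D_-$, each rope of ${\cal N}$ is a radius from $O$ to $\partial D_-$ appearing in the prescribed counterclockwise order, and each head is a small round disk around some $q_{\sigma(i)}$ for a permutation $\sigma$. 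The restriction of ${\cal N}$ to $D_+$ is then an ordered system of $m$ pairwise disjoint arcs from prescribed points on $\partial D_+$ to small disks around the points of $Q$. Since ${\rm Mod}(D_+, Q)$ is the $m$-strand braid group on $Q$ and acts transitively on isotopy classes of such ordered disjoint arc systems (a standard fact from disk mapping class group theory), the desired $g$ exists; extending by the identity on $D_-$ recovers an element of ${\rm Mod}(D_+, Q)$ acting on all of $S^2$.

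Given the claim, I would write ${\cal N} = g({\cal N}^0)$ and ${\cal N}' = g'({\cal N}^0)$ for some $g, g' \in {\rm Mod}(D_+, Q)$, up to homotopy with respect to $Q$, so that $I(D,{\cal N}) = I(D, g({\cal N}^0))$ and $I(D,{\cal N}') = I(D, g'({\cal N}^0))$ up to $\cong$ by Lemma~\ref{lem:homotopychange}. Using the generation of ${\rm Mod}(D_+, Q)$ by the disk twists $\tau_1, \dots, \tau_{m-1}$, I would express $g = \tau_{j_n}^{\epsilon_n} \cdots \tau_{j_1}^{\epsilon_1}$ and similarly $g' = \tau_{k_l}^{\delta_l} \cdots \tau_{k_1}^{\delta_1}$. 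Applying Lemma~\ref{lem:standardA}(2) twice then gives strict equivalences
$$I(D, {\cal N}) \cong \sigma_{j_1}^{-\epsilon_1} \cdots \sigma_{j_n}^{-\epsilon_n} I(D, {\cal N}^0) \quad \text{and} \quad I(D, {\cal N}') \cong \sigma_{k_1}^{-\delta_1} \cdots \sigma_{k_l}^{-\delta_l} I(D, {\cal N}^0).$$
Each $\sigma_j^{\pm 1}$ is the braid action transformation (II), and transformations (II), (IV), (V) are all permitted under $\simeq$, so both $I(D, {\cal N})$ and $I(D, {\cal N}')$ are $\simeq$-equivalent to $I(D, {\cal N}^0)$, and therefore to each other.

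The main obstacle is establishing the transitivity claim. Although it is a classical fact that the braid group acts transitively on arc systems in a punctured disk, some care is needed in the present setting: one must handle the cyclic ordering of ropes around the base point $O \in D_-$, the permutation $\sigma$ matching the noose ordering to the heads around the points of $Q$, and the passage from a homotopy in $S^2$ to the existence of a mapping class group element realizing it. The cleanest route seems to be the one sketched above: first normalize everything inside $D_-$ to radial form, and then invoke the classical transitivity in $D_+$ with endpoints pinned on $\partial D_+$; routine but somewhat tedious verification is needed to confirm that the extended homeomorphism really realizes ${\cal N}$ up to the correct notion of homotopy.
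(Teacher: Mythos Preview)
Your proposal is correct and follows essentially the same route as the paper: reduce both noose systems to the standard ${\cal N}^0$ via an element of ${\rm Mod}(D_+, Q)$ and then apply Lemma~\ref{lem:standardA} to conclude $I(D,{\cal N}) \simeq I(D,{\cal N}^0) \simeq I(D,{\cal N}')$. The only difference is that the paper simply asserts the existence of such $g$ without further comment, whereas you spell out the transitivity argument in detail; your expanded discussion is fine but not strictly necessary at the level of rigor the paper adopts.
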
 

\begin{proof}
There is an element  $ g \in {\rm Mod}(D_+,Q)$ 
such that  ${\cal N} = g ({\cal N}^0)$  up to homotopy in $S^2$ with respect to $Q$.  
By Lemma~\ref{lem:standardA}, $I(D, {\cal N}) \cong I(D, g({\cal N}^0)) \simeq I(D, {\cal N}^0)$. 
Similarly, $I(D, {\cal N}') \simeq I(D, {\cal N}^0)$. Thus, $I(D, {\cal N}) \simeq I(D, {\cal N}')$. 
\end{proof}

\noindent 
{\bf Proof of Theorem~\ref{thm:41}.}
(1) 
First we consider a case that ${\cal N}$ and ${\cal N}'$ have the same base point, say $\ast$. 
Take a homeomorphism $f : S^2 \to S^2$ such that $f(\ast) = O$, $f(\Sigma(D))= Q$ and $f(D)$ is away from $\infty$.  
Put $f(D) = \tilde D$, $f({\cal N})= \tilde{\cal N}$ and $f({\cal N}') = \tilde{\cal N}'$.  Then 
$I(D, {\cal N}) = I( \tilde{D}, \tilde{\cal N})$ and 
$I(D, {\cal N}') = I( \tilde{D}, \tilde{\cal N}')$.  
Applying  Lemma~\ref{lem:standardB} to $\tilde{D}$, $\tilde{\cal N}$ and $\tilde{\cal N}'$, we have $I( \tilde{D}, \tilde{\cal N}) \simeq I( \tilde{D}, \tilde{\cal N}')$.  Thus,  
$I(D, {\cal N}) \simeq I(D, {\cal N}')$. 

(2) Consider a case that the base point $\ast$ of ${\cal N}$  and the base point $\ast'$ of ${\cal N}'$ are contained in the same region, say $R$,  of $S^2 \setminus D$.  Take an open set $U$ in $R$ homeomorphic to the open unit $2$-disk with $\{ \ast, \ast'\} \subset U$ such that the closure $\overline{U}$ is in $R$.   
There is a homeomorphism $f: S^2 \to S^2$ such that $f(\ast) = \ast'$ and the support of $f$ is in $U$. 
Then $I(D, {\cal N}) = I(f(D), f({\cal N}))= I(D, f({\cal N}))$.  Since $f({\cal N})$ has the same base point with ${\cal N}'$, by (1) we have  $I(D, f({\cal N})) \simeq I(D, {\cal N}')$.  Thus, 
$I(D, {\cal N}) \simeq I(D, {\cal N}')$. 

(3) Consider a case that the base point $\ast$ of ${\cal N}$  and the base point $\ast'$ of ${\cal N}'$ are contained in different regions of $S^2 \setminus D$.  It is sufficient to consider a case that the regions containing $\ast$ and $\ast'$ are adjacent and there is a simple arc $\gamma$ in $S^2$ connecting $\ast$ and $\ast'$ such that $\gamma$ intersects with $D$ transversely on a single point.  Let $B$ be a regular neighborhood of $\gamma$ in $S^2$, which is a $2$-disk 
containing $\ast$ and $\ast'$ such that $B \cap D$ is a simple proper arc in $B$ separating $\ast$ and $\ast'$. 
By (1), without loss of generality, we may assume that ${\cal N}$ has base point $\ast$ and $N_i \cap \gamma=\{\ast\}$ for every noose $N_i$ of ${\cal N}$.  
     Let ${\cal N}''$ be a proper noose system for $D$ obtained from ${\cal N}$ by a homotopy in $S^2$ moving the base point $\ast$ to $\ast'$ in $B$ along $\gamma$.  Then $I(D, {\cal N}'') = {\rm conj}(a^{\epsilon}) I(D, {\cal N})$ where $a$ is the label of the component of $D$ whose restriction to $D$ is the proper arc separating $\ast$ and $\ast'$ and the sign $\epsilon \in \{\pm 1\}$ is determined from its  orientation.  Thus, 
$I(D, {\cal N}) \simeq I(D, {\cal N}'')$.  By (2), 
$I(D, {\cal N}'') \simeq I(D, {\cal N}')$.  Thus, 
$I(D, {\cal N}) \simeq I(D, {\cal N}')$. 
     This completes the proof of Theorem~\ref{thm:41}. 
    \vskip-\baselineskip\prbox\par
	\addvspace{12pt plus3pt minus3pt}

\vspace{0.3cm}
\noindent 
{\bf Proof of Theorem~\ref{thm:42}.}
By Theorem~\ref{thm:41}, we see that the equivalence class of $I(D, {\cal N})$ does not depend on a choice of ${\cal N}$ and the isotopy class of $D$ in $S^2$.  Thus, it is sufficient to prove that if $D'$ is obtained from $D$ by a move $H_1^{-1}$ or $H_2^{-1}$ then 
$I(D, {\cal N}) \sim I(D, {\cal N}')$ for some noose systems ${\cal N}$ and ${\cal N}'$ for them. 

Suppose that $D'$ is obtained from $D$ by $H_1^{-1}$.  Take a base point $\ast$ avoiding $D$ and $D'$ and the area where the $H_1^{-1}$ move is applied.  Let $N_1$ be a proper noose for $D$ whose head containing the crossing of $D$ which is removed by the $H_1^{-1}$ move such that the intersection word of $N_1$ with $D$ is $(a,a)^u$ for some $a \in S$ and $u \in {\rm Word}(D)$. We can take a proper noose system ${\cal N} =(N_1, N_2, \dots, N_m)$ for $D$ including $N_1$.   
Let ${\cal N}'= (N_2, \dots, N_m)$, which is a proper noose system for $D'$.  
Then for each $i=2, \dots, m$ the intersection word of $N_i$ with $D$ is the same with that with $D'$, hence 
$I(D', {\cal N}')$ is obtained from $I(D, {\cal N})$ by a transformation (VI).  

Suppose that $D'$ is obtained from $D$ by $H_2^{-1}$.  Take a base point $\ast$ avoiding $D$ and $D'$ and the area where the $H_2^{-1}$ move is applied.  Let $N_1$ and $N_2$ be proper nooses for $D$ whose heads containing the crossings of $D$ which are removed by the $H_2^{-1}$ move such that the intersection words of them with $D$ are $(a,b)^u$ and $(b,a)^u$ for some $a, b \in S$ and $u \in {\rm Word}(D)$. We can take a proper noose system ${\cal N} =(N_1, N_2, N_3, \dots, N_m)$ for $D$ including $N_1$ and $N_2$.  
Let ${\cal N}'= (N_3, \dots, N_m)$, which is a proper noose system for $D'$.  
Then for each $i=3, \dots, m$ the intersection word of $N_i$ with $D$ is the same with that with $D'$, and hence 
$I(D', {\cal N}')$ is obtained from $I(D, {\cal N})$ by a transformation (VII).  
    This completes the proof of Theorem~\ref{thm:42}. 
    \vskip-\baselineskip\prbox\par
	\addvspace{12pt plus3pt minus3pt}

\section{Cobordisms of colored oriented doodles}
\label{sect:Cobordism}

We discuss cobordisms of colored oriented doodles.  

Two colored doodle diagrams $D$ and $D'$ are {\it cobordant} if they are related by a finite sequence of ambient isotopies in $S^2$, moves $H_1^{\pm 1}$ and $H_2^{\pm 1}$, 
and the following local moves:   
\begin{itemize}
\item[(1)] Insertion or deletion of a trivial component with a color in the coloring set $S$. 
Here a {\em trivial component} is a simple loop which is disjoint from the other components of the doodle diagram. See the left hand side of Figure~\ref{dfigCab12} (It is also called a floating component.)  
\item[(2)] (A bridge move) Replacement as in the right hand side of Figure~\ref{dfigCab12} between arcs of the same color. 
\end{itemize} 

Two colored oriented doodles are {\em cobordant} if their representatives are cobordant.

    \begin{figure}[h]
    \centerline{\epsfig{file=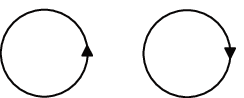, height=1.5cm} 
    \qquad \qquad \qquad 
    \epsfig{file=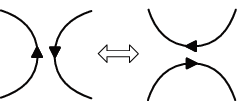, height=1.5cm} }
    \vspace*{8pt}
    \caption{Trivial components and a bridge move}\label{dfigCab12}
    \end{figure}

The following is a key lemma of this section. 

\begin{lemma}\label{thm:CobA}
Let $D$ and $D'$ be colored doodles diagrams such that 
$I(D, {\cal N}) \sim I(D', {\cal N}')$ for some proper noose systems 
${\cal N}$ and ${\cal N}'$.  Then $D$ and $D'$ are cobordant.  
\end{lemma}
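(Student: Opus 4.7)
The plan is to argue by induction on the length of a chain of fundamental transformations (I)--(VII) connecting $I(D,{\cal N})$ to $I(D',{\cal N}')$. Accordingly, it suffices to establish the following single-step statement: whenever a single fundamental transformation changes $I(D,{\cal N})$ to an identity $I'$, one can produce a colored doodle diagram $D^{\ast}$ and a proper noose system ${\cal N}^{\ast}$ with $I(D^{\ast},{\cal N}^{\ast})=I'$ and with $D$ cobordant to $D^{\ast}$.

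First I will handle the transformations (I)--(V), which generate $\simeq$. For each of these I claim that $D^{\ast}=D$ will work, the transformation being realized purely by a change of noose system; this is essentially Theorem~\ref{thm:41} run in reverse. Concretely, (I) is realized by cyclically re-indexing the nooses of ${\cal N}$ (the nooses are arranged in a cyclic order around the base point, so any of them may be called first); (II) is realized by applying the disk twist $\tau_j^{\mp\epsilon}$, via Lemma~\ref{lem:standardA}; (III) is realized by sliding the base point across an appropriate component of $D$, as in the argument of Theorem~\ref{thm:41}(3); and (IV), (V) are realized by homotoping the rope of a single noose, either through $S^2\setminus\Sigma(D)$ or by winding it once around its head, as in Lemma~\ref{lem:homotopychange}. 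In each case $D$ is untouched and hence trivially cobordant to itself.

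The substantive cases are (VI) and (VII), in which a factor is genuinely inserted or deleted, so the diagram itself must change. For a deletion of $(a,a)^u$ in (VI), the corresponding noose $N_i$ has a head containing a single crossing $c$ between two $a$-labelled strands. Using the preliminary $\simeq$-type adjustments from the previous paragraph, my plan is to maneuver the noose system so that $N_i$ sits in a small disk $B\subset S^2$ with $c$ as the unique crossing of $D$ inside $B$, through which exactly two $a$-strands pass. A single bridge move (permitted because both strands carry the same colour $a$) converts $c$ into a monogon, which is then removed by $H_1^{-1}$; the remaining nooses form a proper noose system on the resulting diagram, with intersection words obtained from those of ${\cal N}$ simply by omitting $(a,a)^u$. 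The treatment of (VII) is analogous: after $\simeq$-type normalization one finds two adjacent nooses $N_i,N_{i+1}$ whose conjugators may be arranged (using (IV), (V)) to agree, and two bridge moves, one between the two $a$-strands and one between the two $b$-strands, merge the associated crossings into a bigon to be removed by $H_2^{-1}$. Insertions are handled by running these sequences of cobordism moves backwards.

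The hard part will be the \emph{normalization step} in the treatment of (VI) and (VII): showing that $\simeq$-type moves genuinely suffice to bring the targeted crossings into an isolated small-disk configuration, with only the expected strands entering the disk and no unintended new crossings being created elsewhere. For (VII) this is especially delicate because, a priori, the two crossings with intersection words $(a,b)^u$ and $(b,a)^u$ may lie in distant parts of $D$ and the corresponding ropes may wind arbitrarily. My plan to address this is to combine the simultaneous conjugation (III), which allows the base point to migrate through regions of $S^2\setminus D$, with the freedom to replace each noose by a homotopic one relative to $\Sigma(D)$ (Lemma~\ref{lem:homotopychange}), so that the two relevant ropes and their heads can be brought into a common small disk before the local bridge-move surgery is performed.
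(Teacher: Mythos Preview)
There are two genuine gaps.

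First, your induction is missing its base case. A chain of length zero means $I(D,{\cal N})=I(D',{\cal N}')$ literally, and you must still prove $D$ and $D'$ are cobordant; your single-step statement does not cover this, since at the end of the chain you only have some $D^{\ast}$ with $I(D^{\ast},{\cal N}^{\ast})=I(D',{\cal N}')$, not $D^{\ast}=D'$. The paper makes exactly this the foundational step: after an isotopy one arranges ${\cal N}={\cal N}'$, so $D$ and $D'$ agree on a regular neighbourhood $A$ of the noose system; the complement $E\cong D^2$ contains only embedded arcs and loops with identical boundary data, and these are interchangeable by bridge moves and trivial-component moves. Every later case in the paper's proof reduces back to this step.

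Second, the claim that $D^{\ast}=D$ suffices for (III) and (IV) is false in general. The conjugating word $u$ in ${\rm conj}(u)$, or the new word $u_i'$ in (IV), is an arbitrary element of ${\rm Word}(S\cup S^{-1})$ and may contain letters not labelling any component of $D$; no path in $S^2$ then has that intersection word with $D$, so no noose system on $D$ can produce the transformed identity on the nose. The paper therefore \emph{does} change the diagram here: for (III) it inserts concentric trivial loops around the base point carrying the needed labels, and for (IV) it performs trivial-component and bridge moves in a neighbourhood of the $i$th rope. These are cobordism moves, so the modified diagram is still cobordant to $D$, and then the equality case above applies.

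As for (VI) and (VII): the paper sidesteps your ``hard'' normalization entirely. Rather than surgering at the targeted crossings of $D$, it constructs a small auxiliary diagram (a figure-eight, respectively a pair of circles meeting twice, surrounded by simple loops realising the conjugator $u$) that is visibly null-cobordant and carries exactly the extra commutator(s), inserts it disjointly near the base point of ${\cal N}$, and then invokes the equality case. This avoids having to bring distant crossings together or to justify that bridge moves convert an arbitrary same-colour crossing into a monogon.
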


\begin{proof} 
(1) We first prove that if $I(D, {\cal N}) = I(D', {\cal N}')$ for some proper noose systems 
${\cal N}$ and ${\cal N}'$ then $D$ and $D'$ are cobordant. 

Moving $D$ and ${\cal N}$ together by an isotopy of $S^2$, we may assume that ${\cal N}= {\cal N}'$.  
Let $A$ be a regular neighborhood of the union of nooses of ${\cal N}$ and let $E$ be the closure of $S^2 \setminus A$.  Both $A$ and $E$ are homeomorphic to a $2$-disk.
Since $I(D, {\cal N}) = I(D', {\cal N}')$, moving $D$ in a regular neigbourhood of $A$, we may assume that $D\cap A= D'\cap A$. The intersection $D \cap E$ consists of some properly embedded arcs and some (or no) embedded loops. So does $D' \cap E$. Applying a finite sequence of local moves of insertion/deletion of a trivial component and bridge moves in $E$, we can transform $D \cap E$ to $D' \cap E$. Thus $D$ is cobordant to $D'$.  

(2) We prove that if $I(D, {\cal N}) \cong I(D', {\cal N}')$ for some proper noose systems 
${\cal N}$ and ${\cal N}'$ then $D$ and $D'$ are cobordant. 

(changing local conjugation I)  
Suppose that, for some $i \in \{1, \dots, m\}$, $I(D, {\cal N})$ is $w_1 \cdots w_i \cdots w_m \equiv 1$ and 
$I(D', {\cal N}')$ is $w_1 \cdots w_i' \cdots w_m \equiv 1$ where 
$w_i = (a_i, b_i)^{u_i}$ 
and $w_i' = (a_i, b_i)^{u_i'}$ 
such that $u_i$ and $u_i'$ represent the same element in the free group on $S$.  
Applying a finite sequence of insertion/deletion of a trivial component and bridge moves 
in a regular neighborhood of the rope of the $i$th noose, we can change $D$ to $D''$ such that 
$D$ and $D''$ are cobordant and $I(D'', {\cal N})= I(D', {\cal N}')$.  
By (1), $D''$ is cobordant to $D'$.   Thus $D$ is cobordant to $D'$.  

(changing local conjugation II) 
Suppose that, for some $i \in \{1, \dots, m\}$, 
$I(D, {\cal N})$ is $w_1 \cdots w_i \cdots w_m \equiv 1$ and 
$I(D', {\cal N}')$ is $w_1 \cdots w_i' \cdots w_m \equiv 1$ 
where $w_i = (a_i, b_i)^{u_i}$ 
and $w_i' = (a_i, b_i)^{u_i'}$ 
such that $u_i' =  (a_i, b_i) u_i$ or $u_i' =  (a_i, b_i)^{-1} u_i$. 
Let ${\cal N}''$ be a proper noose system for $D$ obtained from   ${\cal N}$ by rotating the neck of the $i$th head along its loop counterclockwise or clockwise so that 
$I(D, {\cal N}'') = I(D', {\cal N}')$.  By (1), $D$ is cobordant to $D'$. 

Therefore, we see that if $I(D, {\cal N}) \cong I(D', {\cal N}')$ for some proper noose systems 
${\cal N}$ and ${\cal N}'$ then $D$ and $D'$ are cobordant. 

(3) We prove that if $I(D, {\cal N}) \simeq I(D', {\cal N}')$ for some proper noose systems 
${\cal N}$ and ${\cal N}'$ then $D$ and $D'$ are cobordant.

(cyclic permutation) 
Suppose that $I(D', {\cal N}') = \rho I(D, {\cal N})$.  Let ${\cal N} = (N_1, \dots, N_m)$ and  
put ${\cal N}'' =(N_2, \dots, N_m, N_1)$.  
Then $I(D, {\cal N}'')= \rho I(D, {\cal N}) = I(D', {\cal N}')$.  By (1), $D$ and $D'$ are cobordant.  

(braid action) 
Suppose that $I(D', {\cal N}') = \sigma_j I(D, {\cal N})$. 
Moving $D$ and ${\cal N}$ by an isotopy of $S^2$ and moving $D'$ and ${\cal N}'$ be an isotopy of $S^2$ respectively, we may assume that $\Sigma(D) = \Sigma(D') =Q$ and ${\cal N} = {\cal N}^0$, the standard noose system. 
Let ${\cal N}'' = \tau_j^{-1} ({\cal N})$. Then $I(D, {\cal N}'') \cong \sigma_j I(D, {\cal N})$ by Lemma~\ref{lem:standardA}~(1). Thus, $I(D, {\cal N}'') \cong I(D',  {\cal N}')$.  By (2),  $D$ and $D'$ are cobordant.  

(simultaneous conjugation) 
Suppose that $I(D', {\cal N}') =  {\rm conj}(u) I(D, {\cal N})$ 
where $u \in {\rm Word}(S \cup S^{-1})$.  
Let $D''$ be a colored doodle diagram obtained from $D$ by a finite sequence of insertion of a trivial component whose center is the base point of ${\cal N}$ such that $D$ and $D''$ are cobordant and 
$I(D'', {\cal N}) = {\rm conj}(u) I(D, {\cal N})$.  Thus, $I(D', {\cal N}')= I(D'', {\cal N})$. 
By (1), $D'$ and  $D''$ are cobordant, and hence $D$ and $D'$ are cobordant. 

Now we see that if $I(D, {\cal N}) \simeq I(D', {\cal N}')$ for some proper noose systems 
${\cal N}$ and ${\cal N}'$ then $D$ and $D'$ are cobordant. 

(4) We prove that if $I(D, {\cal N}) \sim I(D', {\cal N}')$ for some proper noose systems 
${\cal N}$ and ${\cal N}'$ then $D$ and $D'$ are cobordant.

(insertion/deletion of a trivial commutator) 
Suppose that $I(D', {\cal N}')$ is obtained from $I(D, {\cal N})$ by inserting  
$(a,a)^u$ where $a \in S$ and  $u \in {\rm Word}(S \cup S^{-1})$.   
Let $D^\ast$ be a colored doodle diagram 
consisting of an immersed loop with a single crossing with label $a$ surrounded by some simple loops 
such that a proper noose for it, say $N$, has the intersection word $(a,a)^u$.  See the left hand side of Figure~\ref{dfigCab34}.  (Note that $D^\ast$ is cobordant to the empty diagram.)
 Let $D''$ be a colored doodle diagram and ${\cal N}''$ a proper noose system for $D''$ such that 
they are obtained from $D$ and ${\cal N}$ by inserting $D^\ast$ and $N$ near the base point of ${\cal N}$ 
such that $I(D'', {\cal N}'') = I(D', {\cal N}')$.  Note that $D$ is cobordant to $D''$. 
By (1), $D''$ is cobordant to $D'$.  Thus, $D$ and $D'$ are cobordant. 

(insertion/deletion of a cancelling pair) 
Suppose that $I(D', {\cal N}')$ is obtained from $I(D, {\cal N})$ by inserting  
 $(a,b)^u (b,a)^u$  
where $a, b \in S$ and $u \in {\rm Word}(S \cup S^{-1})$.  
Let $D^{\ast\ast}$ be a colored doodle diagram consisting of a pair of simple loops with labels $a$ and $b$ intersecting twice 
surrounded by some simple loops such that a pair of proper nooses for it, say $N_1^{\ast\ast}$ and $N_2^{\ast\ast}$, have the intersection words  $(a,b)^u$ and $(b,a)^u$.  See the right hand side of Figure~\ref{dfigCab34}. 
(Note that $D^{\ast\ast}$ is cobordant to the empty diagram.) 
 Let $D''$ be a colored doodle diagram and ${\cal N}''$ a proper noose system for $D''$ such that 
they are obtained from $D$ and ${\cal N}$ by inserting $D^{\ast\ast}$ and $N_1^{\ast\ast}$ and $N_2^{\ast\ast}$ near the base point of ${\cal N}$ such that $I(D'', {\cal N}'') = I(D', {\cal N}')$.  Note that $D$ is cobordant to $D''$. 
By (1), $D''$ is cobordant to $D'$.  Thus, $D$ and $D'$ are cobordant. 

Therefore we see that if $I(D, {\cal N}) \sim I(D', {\cal N}')$ for some proper noose systems 
${\cal N}$ and ${\cal N}'$ then $D$ and $D'$ are cobordant. 
\end{proof}

    \begin{figure}[h]
    \centerline{\epsfig{file=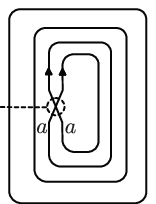, height=4.5cm} 
    \qquad \qquad 
    \epsfig{file=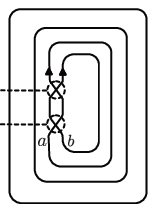, height=5.0cm} }
    \vspace*{8pt}
    \caption{$D^\ast$ and $D^{\ast\ast}$}\label{dfigCab34}
    \end{figure}

The following lemma is the converse of Lemma~\ref{thm:CobA}.  

\begin{lemma}\label{thm:CobB}
Let $D$ and $D'$ be colored oriented doodles diagrams, and let 
${\cal N}$ and ${\cal N}'$ be proper noose systems for them.  
If $D$ and $D'$ are cobordant then 
$I(D, {\cal N}) \sim I(D', {\cal N}')$.    
\end{lemma}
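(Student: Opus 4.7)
The plan is to check that each elementary cobordism move takes a pair $(D, {\cal N})$ of colored doodle diagram and proper noose system to some $(D', {\cal N}')$ with $I(D, {\cal N}) \sim I(D', {\cal N}')$. Theorem~\ref{thm:42} already handles the moves generating equivalence of doodles (ambient isotopy together with $H_1^{\pm 1}$ and $H_2^{\pm 1}$), so it suffices to treat the two additional cobordism moves: insertion/deletion of a trivial component, and a bridge move. Theorem~\ref{thm:41} further allows me to replace ${\cal N}'$ by any convenient proper noose system for $D'$ without leaving the $\simeq$-class, so I am free to choose nooses adapted to each move.

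For insertion/deletion of a trivial component $C$, I would use ambient isotopy (already handled) to place $C$ inside a small disk contained in the open set $S^2 \setminus (D \cup {\cal N})$. Since $\Sigma(D') = \Sigma(D)$ and $C$ is disjoint from ${\cal N}$, the same system ${\cal N}$ is proper for $D'$ as well, and the intersection word of each noose with $D'$ coincides with that with $D$, giving $I(D, {\cal N}) = I(D', {\cal N})$ literally.

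For a bridge move supported in a small disk $B \subset S^2$, the sets $D \cap B$ and $D' \cap B$ are disjoint unions of properly embedded oriented arcs, all labeled by a single color $a \in S$, sharing the same endpoint set $P \subset \partial B$ with the same orientation signs, and $\Sigma(D) = \Sigma(D')$ lies entirely outside $B$. By Theorem~\ref{thm:41} I would choose ${\cal N}$ with all heads outside $B$, so that ${\cal N}$ is also a proper noose system for $D'$. The loop intersection words $(a_i, b_i)$ are then manifestly unchanged, and the difference between $I(D, {\cal N})$ and $I(D', {\cal N})$ lies entirely in the conjugating rope words $u_i$. Each rope decomposes into sub-arcs inside and outside $B$; outside sub-arcs contribute identical letters, while on an inside sub-arc $\alpha$ every intersection letter is $a^{\pm 1}$, so the local contribution is a word in the cyclic subgroup $\langle a \rangle$ of the free group on $S$, and reduces to $a^k$ with $k$ equal to the signed intersection number $\alpha \cdot (D \cap B)$. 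The key point is that $D \cap B$ and $D' \cap B$ have identical boundary in $H_0(P; \mathbb{Z})$, and since $H_1(B) = 0$ we get $[D \cap B] = [D' \cap B]$ in $H_1(B, P; \mathbb{Z})$, whence $\alpha \cdot (D \cap B) = \alpha \cdot (D' \cap B)$. Consequently each inside-$B$ sub-word is replaced by one representing the same element of the free group, so each $u_i$ is replaced by some $u_i'$ with $u_i \equiv u_i'$ in the free group. This is precisely transformation (IV), giving $I(D, {\cal N}) \simeq I(D', {\cal N})$, and hence $I(D, {\cal N}) \sim I(D', {\cal N}')$ via Theorem~\ref{thm:41}.

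The main obstacle is the bridge move step, and specifically the observation that the single-color hypothesis on the arcs inside $B$ forces the local rope contributions to land in the infinite cyclic subgroup $\langle a \rangle$, where they are determined by a boundary-fixed signed intersection count. Without this hypothesis the local rope sub-word could genuinely change to a non-equivalent word in the free group, and the argument would fail; with it, the change is absorbed by a transformation (IV) and everything else reduces to Theorems~\ref{thm:41} and~\ref{thm:42}.
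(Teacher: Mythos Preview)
Your proof is correct and follows the same overall strategy as the paper: invoke Theorem~\ref{thm:42} to reduce to the two new cobordism moves, then use Theorem~\ref{thm:41} to freely choose a convenient proper noose system for each move.

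The difference lies in how you handle the bridge move. The paper simply observes that one may choose the \emph{entire} noose system ${\cal N}''$ --- heads \emph{and} ropes --- to be disjoint from the disk $B$ where the move takes place (this is possible since $\Sigma(D)\cap B=\emptyset$ and $S^2\setminus B$ is a disk). Then ${\cal N}''$ is a proper noose system for both $D$ and $D'$, and $I(D,{\cal N}'')=I(D',{\cal N}'')$ literally, as words. You instead keep only the heads outside $B$, allow ropes to cross $B$, and then run a homology argument to show that the rope sub-words inside $B$, being words in $a^{\pm1}$, reduce to the same power $a^k$ before and after the move, so that the change is absorbed by transformation~(IV). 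Your argument is valid --- the signed intersection number of a chord $\alpha$ of $B$ with a properly embedded oriented $1$-manifold in $B$ depends only on the oriented boundary of the latter --- but it is more work than necessary. The paper's observation that the nooses can avoid $B$ altogether makes the bridge-move step a one-liner.
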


\begin{proof}
By Theorem~\ref{thm:42}, it is sufficient to consider a case that 
$D'$ is obtained from $D$ by a local move which is an insertion/deletion of a trivial component or a bridge move.  
Let ${\cal N}''$ be a proper noose system for $D$ such that every noose of it is away from the area where the local move takes place. Then ${\cal N}''$ is also a proper noose for $D'$ and 
$I(D, {\cal N}'') = I(D', {\cal N}'')$.  
Thus, by Theorem~\ref{thm:41}, we see that  $I(D, {\cal N}) \simeq I(D', {\cal N}')$.  
\end{proof}

By Lemmas~\ref{thm:CobA} and \ref{thm:CobB}, we have the following theorem.

\begin{theorem}\label{thm:CobC}
Let $D$ and $D'$ be colored doodles diagrams, and let 
${\cal N}$ and ${\cal N}'$ be proper noose systems for them.  
Then, $D$ and $D'$ are cobordant if and only if  
$I(D, {\cal N}) \sim I(D', {\cal N}')$.    
\end{theorem}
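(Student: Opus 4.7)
The plan is to observe that Theorem~\ref{thm:CobC} is precisely the biconditional assembled from the two preceding lemmas, so its proof reduces to invoking them in the appropriate directions and tying together the \lq\lq for some\rq\rq\ versus \lq\lq for any\rq\rq\ choices of noose systems.

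For the \emph{only if} direction, assume that $D$ and $D'$ are cobordant. Then Lemma~\ref{thm:CobB} applies directly with the given proper noose systems ${\cal N}$ and ${\cal N}'$, yielding $I(D,{\cal N}) \sim I(D',{\cal N}')$. No extra argument is needed, since Lemma~\ref{thm:CobB} was stated for arbitrary proper noose systems for the two diagrams.

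For the \emph{if} direction, assume that $I(D,{\cal N}) \sim I(D',{\cal N}')$ for the given proper noose systems. Lemma~\ref{thm:CobA} immediately produces a cobordism between $D$ and $D'$. The logical subtlety one should mention is that Lemma~\ref{thm:CobA}'s hypothesis asks only for the existence of \emph{some} pair of proper noose systems satisfying the weak equivalence, while Theorem~\ref{thm:CobC} is phrased with a fixed pair ${\cal N}$ and ${\cal N}'$; however this is not an issue, and the compatibility could equally be invoked via Theorem~\ref{thm:41}, which guarantees that the $\simeq$-class (and hence the $\sim$-class) of $I(D,{\cal N})$ is independent of the chosen proper noose system for $D$.

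I expect no genuine obstacle at this stage. The substantive work has already been carried out in Lemma~\ref{thm:CobA}, whose case analysis over the generating fundamental transformations (I)--(VII) realized each elementary move on commutator identities by a cobordism of colored doodle diagrams. The proof of Theorem~\ref{thm:CobC} therefore amounts to the two-line combination of Lemmas~\ref{thm:CobA} and \ref{thm:CobB}.
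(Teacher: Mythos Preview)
Your proof is correct and matches the paper's approach exactly: the theorem is stated immediately after Lemmas~\ref{thm:CobA} and~\ref{thm:CobB} with the remark that it follows from them, and your write-up simply spells out the two directions. The extra care you take about ``for some'' versus ``for any'' noose systems is harmless but unnecessary, since the fixed pair ${\cal N},{\cal N}'$ in the hypothesis already witnesses the existential requirement of Lemma~\ref{thm:CobA}.
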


By Theorem~\ref{thm:42} we have a map from the set of colored doodles to the set of weak equivalence classes of elementary commutator identities.

\begin{theorem}\label{thm:CobD}
The map from the set of colored doodles to the set of weak equivalence classes of elementary commutator identities 
induces a bijection between 
cobordism classes of colored doodles and weak equivalence classes of elementary commutator identities.  
\end{theorem}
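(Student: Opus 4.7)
The plan is to combine the three major results already established: Theorem~\ref{thm:42} (well-definedness of the induced identity up to $\sim$), Theorem~\ref{thm:DiagramProper} (realizability of every elementary commutator identity by a diagram with proper noose system), and Theorem~\ref{thm:CobC} (the cobordism/weak-equivalence correspondence at the level of diagrams). These three inputs respectively give well-definedness, surjectivity, and injectivity of the map in question.

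First I would describe the map precisely. For a colored doodle $[D]$, choose a diagram $D$ representing it and a proper noose system $\mathcal{N}$ for $D$; send $[D]$ to the weak equivalence class $[I(D,\mathcal{N})]_\sim$. Theorem~\ref{thm:42} tells us that this assignment does not depend on the representative $D$ nor on the choice of $\mathcal{N}$, so it gives a well-defined map $\Phi$ from colored doodles to weak equivalence classes of elementary commutator identities. Next, Theorem~\ref{thm:CobC} says that two colored doodle diagrams $D,D'$ are cobordant if and only if $I(D,\mathcal{N}) \sim I(D',\mathcal{N}')$, so cobordant doodles have the same image under $\Phi$. Therefore $\Phi$ factors through the set of cobordism classes to yield the desired map $\overline{\Phi}$.

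For surjectivity, I would apply Theorem~\ref{thm:DiagramProper}: given any elementary commutator identity, it is realized as $I(D,\mathcal{N})$ for some colored doodle diagram $D$ equipped with a proper noose system $\mathcal{N}$. The cobordism class of (the doodle represented by) $D$ then maps to the weak equivalence class of the given identity. For injectivity, suppose $\overline{\Phi}([D]_{cob}) = \overline{\Phi}([D']_{cob})$, meaning $I(D,\mathcal{N}) \sim I(D',\mathcal{N}')$ for some proper noose systems. By the other direction of Theorem~\ref{thm:CobC}, $D$ and $D'$ are cobordant, so their cobordism classes coincide. Hence $\overline{\Phi}$ is a bijection.

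In this argument there is no real obstacle, as the theorem is essentially a repackaging of Theorem~\ref{thm:CobC} together with the existence/realization results; the substantive work has already been done in Sections~\ref{sect:Diagrams}, \ref{sect:EquivalnceIdentities} and in Lemmas~\ref{thm:CobA} and \ref{thm:CobB}. The only point requiring any care is to verify that every colored doodle admits a proper noose system (so that $\Phi$ is defined on every class), but this is noted immediately before Theorem~\ref{thm:IdentityProper}. Thus the proof reduces to stringing these observations together.
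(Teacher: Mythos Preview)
Your proposal is correct and follows essentially the same route as the paper. The only cosmetic difference is that the paper cites Lemmas~\ref{thm:CobA} and~\ref{thm:CobB} separately (for injectivity and for the factoring through cobordism, respectively), whereas you invoke their packaged form Theorem~\ref{thm:CobC}; otherwise the three inputs and their roles are identical.
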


\begin{proof}
By Lemma~\ref{thm:CobB}, we see that 
the map from the set of colored doodles to the set of weak equivalence classes of elementary commutator identities factors through the set of cobordism classes of colored doodles.  
Lemma~\ref{thm:CobA} implies that this map is injective, and 
Theorem~\ref{thm:DiagramProper} implies that it is surjective. 
\end{proof}

In this paper we have discussed commutator identities related to doodles on the 2-sphere.  Doodles were generalized to surfaces with higher genus \cite{BFKKdoodles}.  Mark Culler \cite{Culler} studied commutator identities using surfaces.  We will discuss in a later paper commutator identities related to doodles on surfaces.

\end{document}